\newcommand{\TimesFont}{\RequirePackage{times}\RequirePackage[scaled=0.92]{helvet}}
\newcommand{\dx}{\,\mathrm{d}x}
\newcommand{\domega}{\,\mathrm{d}\omega}
\renewcommand{\Pr}{\mathbb{P}}
\newcommand{\cost}[1]{\mathcal{C}\left[#1\right]}
\newcommand{\costb}[1]{\mathcal{C}\big[#1\big]}
\newtheorem{remark}[theorem]{Remark}
\newtheorem{assumption}[theorem]{Assumption}
\numberwithin{equation}{section}
\definecolor{correction}{rgb}{0,0,1}
\newcommand{\correction}[1]{{\color{black}{#1}}}
\newcommand{\update}[1]{{\color{black}{#1}}}
\begin{document}\selectlanguage{english}

\title{Adaptive multilevel subset simulation with selective refinement} 

\author{D. Elfverson\thanks{Formerly at Department of Mathematics and Mathematical statistics, Umeå University, SE-901 87 Umeå, Sweden (\texttt{daniel.elfverson@gmail.com}).}\and R. Scheichl\thanks{Institute for Applied Mathematics \& Interdisciplinary Center for Scientific Computing (IWR), Heidelberg University, D-69120 Heidelberg, Germany (\texttt{r.scheichl@uni-heidelberg.de}).} \and S. Weissmann\thanks{\correction{Institute of Mathematics, University of Mannheim,
  D-68138 Mannheim, Germany (\texttt{simon.weissmann@uni-mannheim.de})}.}\and F.A. DiazDelaO\thanks{Clinical Operational Research Unit, Department of Mathematics, University College London, London WC1H OBT, UK (\texttt{alex.diaz@ucl.ac.uk}).}
}

\date{}

\maketitle

\begin{abstract} 
In this work we propose an adaptive multilevel version of subset
  simulation to estimate the probability of rare events
  for complex physical systems. Given a sequence of nested failure domains 
  of increasing size, the rare event probability is expressed as a product of  
  conditional probabilities.
  The proposed new estimator uses different model resolutions and varying numbers of
  samples across the hierarchy of nested failure sets. In order to dramatically reduce the  computational 
cost, we construct the intermediate failure sets such that only a small number of expensive high-resolution model
  evaluations are needed, whilst the majority of
  samples can be taken from inexpensive low-resolution simulations. 
  A key idea in our new estimator is the use of a posteriori error estimators combined with a selective mesh refinement strategy
  to guarantee the critical subset property that may be violated when changing
  model resolution from one failure set to the next.  The efficiency
  gains and the statistical properties of the estimator are investigated both theoretically via shaking transformations, as well as
  numerically. On a model problem from subsurface flow, the new multilevel
  estimator achieves gains of more than a factor 60 over standard
  subset simulation for a practically relevant relative error of 25\%. 
\end{abstract}

\noindent {\footnotesize
	{\bf Keywords.} {Rare event probabilities, adaptive model hierarchies, high-dimensional problems, Markov chain Monte Carlo, shaking transformations.}
\\
	\noindent {\bf AMS(MOS) subject classifications.} 65N30, 65C05, 65C40, 35R60
}

\section{Introduction} 
Estimating the probability of rare events  is one of the most important and
computationally challenging tasks in science and engineering. By definition, the
probability that a rare event occurs is very small. However, its effect could be catastrophic, e.g.,
when it is associated with some critical system failure such as the breakthrough
of pollutants into a water reservoir or the structural failure of an airplane wing. 
An efficient and reliable estimator is of utmost importance in such situations.

We are interested in estimating rare event probabilities occurring in
mathematical models of physical systems described by stochastic
differential equations
(SDEs) or partial differential equation (PDEs) with uncertain data, with
possibly large stochastic dimension. Standard Monte Carlo (MC) methods are
infeasible due to the huge amount of samples needed to produce even a single
rare event. Remedies for this are offered by different variance reduction techniques,
such as {\em importance sampling} \cite{SCHUELLER1987293,doi:10.1061,Bucher1988119},
{\em multilevel MC methods} \cite{Giles08,Barth2011,Cliffe2011,EHM14,Bucher2009504,MR2538857} and 
{\em subset simulation} \cite{AB01,AB03,PaWoKiZwSt14, UP2015}. In the statistics and
probability literature
\cite{SplittingMC}, subset simulation is also known as {\em splitting}
and can be interpreted as a sequential Monte Carlo method
\cite{Botev2010,MR2909622,GHM11,CAZT14,BLR15}. 
There are also Bayesian versions of subset simulation using Gaussian
process emulators to reduce the number of model
evaluations \cite{BLV16}. Moreover, a cross-entropy-based importance sampling method for rare events and a failure-informed dimension reduction through the connection to Bayesian inverse problems has been addressed in \cite{UPMS2021}.

In this work, we propose a multilevel version of the classical subset
simulation approach of Au \& Beck \cite{AB01} for rare event
estimation in the context of
complex models. {More specifically,} we combine the ideas of incorporating multilevel model resolutions across the subsets proposed in \cite{UP2015} with the selective mesh refinement strategy proposed in \cite{EHM14}.  As a result, our 
method does not violate the subset property, critical in subset simulation, when changing the model resolution. Furthermore, we formulate the subset simulation based on shaking {transformations to ensure asymptotic convergence of the resulting estimator, as shown in \cite{GL2015}}.

Let $(\Omega,\Sigma,\Pr)$ be \correction{a 
probability space with 
$\sigma$-algebra $\Sigma$ and probability measure $\Pr$, 
defined over some non-empty set $\Omega$. Given a mathematical model that describes the behaviour of a physical system, the sample space $\Omega$ will form the input space for any sources of uncertainty or randomness. Unless otherwise stated, we refer to a random variable $X$ with state space $\mathbb R^d$ as a $\Sigma$-$\mathcal B(\mathbb R^d)$-measurable mapping. The expected value of a measurable functional $F:\mathbb R^d\to\mathbb R^m$ of such a random variable $X$ with density 
$\pi$ is denoted by
\[\mathbb E[F(X)] = \int_{\Omega} F(X(\omega))\,\mathbb P({\mathrm d}\omega) = \int_{\mathbb R^d} F(x)\pi({\mathrm d}x) =: \mathbb E_{\pi}[F(X)].\]
If it is clear from the context, we will suppress the subscript $\pi$ and simply write $\mathbb E[F(X)]$. 
}

Let $F \in \Sigma$ 
be a \emph{rare event} that can be associated to system failure, that is, the demand exceeding the capacity 
of the system under study. Given the above probability space, subset simulation splits the estimation of $\Pr(F)  $ into 
a product of
conditional probabilities by introducing a sequence of nested events
\begin{equation}
\label{nested_events}
  F = F_K \subset F_{K-1} \subset \dots \subset F_1 \subset F_0 = \Omega,\quad F_j\in\Sigma,\ j=0,\dots,K,
\end{equation}
corresponding to larger failure probabilities as the subscript $j \to 0$.
The rare event probability $\Pr(F)$ is usually prohibitive to estimate directly by
MC sampling, but it can be expressed as the product of conditional probabilities
\begin{equation}
\label{subset_prod}
  \Pr(F) = \prod_{j=1}^K\Pr(F_j|F_{j-1}),
\end{equation}
where each $\Pr(F_j|F_{j-1})$ is larger than $\Pr(F)$ and hence easier to
estimate. The conditional probabilities in~\eqref{subset_prod}
can be estimated effectively using Markov Chain Monte Carlo (MCMC).

Computer simulations to study physical systems very often take the form of SDE or PDE models. Making use of a hierarchy of discretisations of these underlying models,
we exploit ideas from multilevel Monte Carlo (MLMC) methods
\cite{Giles08,Barth2011,Cliffe2011,EHM14} to reduce the overall cost
of subset simulation. We refer to \cite{HST2021,WLPU2020} for different variants of MLMC estimators of rare event probabilities. In principle, our approach is similar to the approach presented in \cite{UP2015}. However, in
contrast to that work and more closely related to the ideas in MLMC methods, 
we design the sequence of nested events in \eqref{nested_events} such
that most samples can be computed with inexpensive, coarse models. This
reduces the total cost significantly compared to classical subset
simulation, leading to more significant gains than the multilevel variant in \cite{UP2015}.

Following the ideas in \cite{EEHM14,EHM14}, another key advance in our new method is the use of a posteriori error
estimators to guarantee the critical subset property, which may be violated when
changing the model resolution from one intermediate failure set to the
next. It also allows a selective, sample-dependent choice of model
resolution. 
Finally, these three advances lead to 
significant speed-up over classical subset simulation for our multilevel
estimator. 

In summary, the paper makes the following contributions:
(i) \ The subset simulation method is formulated via shaking transformations, incorporating a selective mesh refinement strategy. The resulting algorithm is based on a MCMC method, where each sample involves an adaptive mesh resolution based on its limit state function value.  As a result, high accuracy is only needed for samples with a rather small limit state function, whereas for most samples low resolution evaluations are sufficiently accurate when the state is far away from failure.
(ii) \ Under certain assumptions, it is shown that the proposed selective refinement strategy does not violate the subset property. Moreover, a detailed complexity analysis quantifies the gains due to the selective refinement strategy.
(iii) \ A novel, adaptive multilevel subset simulation method is proposed where the accuracy increases over the defined subsets. Through a selective refinement strategy and appropriately chosen intermediate threshold values, the failure sets satisfy the critical subset property. Our complexity analysis shows significant improvement through the proposed adaptive multilevel subset choice and through the additional application of selective mesh refinement.

The paper is organized as follows. {In Section~\ref{problem-formulation}, we formulate
the problem and define hierarchies of discretisations via two abstract, sample-wise assumptions.
In Section~\ref{s:rare}, we explore in detail the estimation of rare event probabilities, first
using standard MC and classical subset simulation, before proposing two improved estimators 
based on selective refinement and adaptive multilevel subset selection. Section~\ref{sec:MCMC} contains a 
concrete implementation of the 
algorithms via shaking transformations, as well as
an asymptotic convergence result. The complexity analysis of the novel approaches is provided in Section \ref{sec:complexity},
while Section~\ref{s:numerics} demonstrates their performance on a series of numerical experiments, starting with two toy problems and finishing with a 
Darcy flow model. 
Finally, Section~\ref{s:conclusions} offers some conclusions.}

\section{Problem Formulation and Model \correction{Hierarchies}} \label{problem-formulation} 

We consider a (linear or nonlinear) model $\mathcal{M}$ on an infinite
dimensional function space $V$, e.g., a PDE, which is subject
to uncertainty, or an SDE. The solution is modelled as a random field on the probability space
$(\Omega,\Sigma,\Pr)$ with values in $V$. For
any $\omega\in\Omega$, we denote by $u(\omega) \in V$ the solution of 
\begin{equation}\label{eq:model}
	\mathcal{M}(\omega,u(\omega)) = 0.
\end{equation}
Given a quantity of interest $\Phi:V \to \mathbb{R}$, i.e., a functional of
the model solution $u$, we are interested in computing the probability
that a so-called {\em rare event} occurs. We let $G=G(\omega):=\Phi^*
- \Phi(u(\omega))$ be the associated limit state function, which is 
negative when the quantity of interest exceeds a critical value
$\Phi^*$. Thus, we want to compute the
probability that $\omega\in\Omega$ is in the failure set
\[F:=\{\omega\in\Omega:G(\omega)\leq 0\}.\]
For simplicity, we will assume that $G$ is a real valued random variable with probability density function (pdf) (with respect to Lebesgue measure) $\pi:\mathbb R\to \mathbb R_+$, which is assumed to be unknown.  If $\mathbbm{1}_F$ denotes the indicator function of $F$, i.e., $\mathbbm{1}_F(\omega)=1$ if
$\omega\in F$ and  $\mathbbm{1}_F(\omega)=0$ otherwise, then the failure
probability can also be expressed as the integral
\begin{equation}\label{eq:P}
{P=\Pr\left(F\right)=\int_\Omega \mathbbm{1}_F(\omega)\,\Pr(\domega) = \int_{\mathbb R} \mathbbm{1}_{(-\infty, 0]}(x)\pi(x)\, \dx = \int_{-\infty}^0 \pi(x)\, \dx}
\end{equation}
which, for very small $P\ll 1$, will be classified as a rare event. Note that the integral in \eqref{eq:P} 
is equivalent to the expected value \correction{$\mathbb{E}[{\mathbbm{1}_F}]$}. We are interested
in applications where not only the dimension of $V$, but also the
dimension of the underlying sample space $\Omega$ is high (or infinite),
e.g., in subsurface flow simulations where the permeability is described by a
spatially correlated random field.  To estimate $P$, both $V$ and the high-dimensional
integral in \eqref{eq:P} need to be approximated in practice.  

We return to the approximation of the above integral
in 
Section \ref{s:rare} and finish this section by formulating some abstract assumptions on the 
numerical approximation of the model $\mathcal{M}$ and of the limit
state function $G$, for any given $\omega \in \Omega$. To this end, 
we introduce a hierarchy of numerical approximations to $G$ with increasing accuracy, namely $G_\ell$, for $\ell 
= 1,\ldots,L$. 

\begin{assumption}\label{ass:error_estimate1}
{\correction{\begin{enumerate}
\item[(a)] We assume that the cdf of $G$ is Lipschitz continuous, i.e., there exists a constant $c_{\mathrm{Lip}}>0$ such that for any $x,y\in\mathbb R$
\begin{equation}
|\Pr(G\le y)-\Pr(G\le x)|\le c_{\mathrm{Lip}} |y-x|.
\end{equation}
\item[(b)] 
Let $\gamma\in(0,1)$ and $q \ge 0$. Then, for all $\ell \in \mathbb{N}$, we assume that
\begin{equation}
\label{eq:error_bound}
  |G(\omega) - G_\ell(\omega)| \leq \gamma^\ell, \quad 
  \Pr-\text{a.s. in} \ \ \omega,
\end{equation}
and that there exists a constant $c_0>0$ such that 
\begin{equation}\label{eq:cost1}
\correction{\mathbb E[\cost{G_\ell}]} \le c_0 \gamma^{-\ell q}.
\end{equation}
\end{enumerate}}}
\end{assumption}
 Note that \correction{to estimate $\Pr(F)$, high} accuracy is only needed for samples $\omega \in \Omega$ where the limit state function $G$ is small. \correction{Thus, given a family of random variables $(G_\ell,\; \ell\in\mathbb N)$ satisfying Assumption~\ref{ass:error_estimate1}, the following {\em selective refinement} 
 strategy was introduced in \cite{EEHM14,EHM14}, in fact for the more general task of approximating $\Pr(G\le y)$ for any $y\in\mathbb R$.
\begin{algorithm}[H]
\caption{Selective refinement strategy introduced in \cite{EEHM14,EHM14}.}
\label{alg:selective}
\begin{algorithmic}[1]
\correction{\State Let $y\in\mathbb R$, $\ell \in \mathbb N$ and $\omega\in\Omega$.
    \State Compute $G_{1}(\omega)$ and initialise $k=1$ and $G_\ell^y(\omega) = G_1(\omega)$.
    \While{$k < \ell$ and \update{$|G_{k}(\omega)-y| < \gamma^{k}$}} \State refine the accuracy by setting $k \leftarrow k+1$ and compute $G_{k}(\omega)$.
    \State Set $G_\ell^y(\omega) = G_{k}(\omega)$.
\EndWhile}
\end{algorithmic}
\end{algorithm}
It has been shown in \cite[Lem.~5.1 \& 5.2]{EHM14} that the construction in Algorithm \ref{alg:selective} leads to a ($y$-dependent) random variable $G_\ell^y:\Omega\to\mathbb R$ which satisfies the following assumption.} 
\correction{\begin{assumption}\label{ass:selective_refinement}
Suppose that Assumption~\ref{ass:error_estimate1} (a) holds and let $\gamma\in(0,1)$ and $q \ge 0$. Then, for fixed $y\in\mathbb R$ and
for all $\ell \in \mathbb{N}$, we assume that 
\begin{equation}
\label{eq:error_bound2}
  |G(\omega) - G_\ell^y(\omega)| \leq { \max\Big(\gamma^\ell,|G_\ell^y(\omega)-y|\Big)}, \quad 
  \Pr-\text{a.s. in} \ \ \omega,
\end{equation}
and that there exists a constant $c_0>0$ such that \begin{equation*}
\mathbb E[\cost{G_\ell^y}] 
\le c_0\left(1+\gamma^{(1-q)\ell}\right).
\end{equation*}

\end{assumption}
It was also shown in the proof of \cite[Lem.~5.2]{EHM14} that under Assumption~\ref{ass:selective_refinement} the bias for approximating $\Pr(G\le y)$ remains of order $\gamma^\ell$, i.e., there exists a constant $c_y>0$ independent of $\gamma$ and $\ell$ such that
\begin{equation*}
\left|\Pr(G_\ell^y\le y)-\Pr(G\le y)\right|\le c_y\gamma^{\ell}.
\end{equation*}
In order to verify that $G_\ell^y$ indeed is a random variable on $(\Omega,\Sigma,\mathbb P)$, we introduce (as in~\cite{Detommaso2019}) the stopping time $\omega\mapsto\tau_y(\omega) := \inf\{k\in \{1,\dots,\ell\}\mid  |G_k(\omega) -y|\ge \gamma^{\ell}\}$ with respect to the natural filtration $\mathcal F_k = \sigma(G_s,s\le k)$ and define $G_\ell^y:= G_{\ell\wedge\tau_y}$ as the final iteration of the stopped discrete time stochastic process $(G_{k\wedge \tau_y},k = 1,\dots,\ell)$ which is measurable with respect to $\mathcal F_L\subset \Sigma$, since all $G_k$ are assumed to be $\Sigma/\mathcal B(R)$-measurable.

Note that Assumption 2.2 holds naturally for the choice $y=\infty$, since any finite random variable $G_\ell$ satisfying Assumption 2.1 also satisfies \eqref{eq:error_bound2} and $\mathbbm{1}_{G_\ell\le \infty}=1$ almost surely.}
\begin{remark}\label{rem:notation}
For the complexity analysis of subset simulation and of the standard MC method, we will consider a fixed $y\in\mathbb R$ and thus suppress the dependence of $G_\ell^y$ on $y$. \correction{Under Assumption~\ref{ass:selective_refinement} we then refer to the random variable $G_\ell^y$ satisfying \eqref{eq:error_bound2} simply by $G_\ell$.}
\end{remark}

In a PDE setting, we typically have \correction{approximations \update{$\tilde G_h$} of $G$ associated with some numerical discretisation method and} error bounds of the type
\begin{equation}
\label{uniform_bound}
  |G(\omega)-\update{\tilde G_h(\omega)}|\leq C(\omega)\correction{h^\alpha}\,,
\end{equation}
where $h$ is a discretisation parameter, e.g., mesh size, and $\alpha$ is a convergence rate. The constant $C(\omega)$ may depend on $\omega$. for SDEs or PDEs with random coefficients, $\alpha$ and $C(\omega)$ can be estimated 
using adjoint methods or hierarchical error estimators, see, e.g., \cite{BDW2011,GS02,O2018}. 

\correction{For sufficiently well behaved models $\mathcal M$ and sufficiently smooth functionals $\Phi$, there exists a constant $C_{\max} < \infty$ such that $C(\omega) \le C_{\max}$ $\Pr$-almost surely in $\omega$. Thus, considering for example numerical approximations of $G$ obtained} on a sequence of uniformly refined meshes $\mathcal{T}_\ell$, $\ell \in \mathbb{N}_0$, with $h_\ell = h_0 2^{-\ell}$ and $\mathcal{T}_0$ some fixed coarsest mesh, the 
bound in \eqref{eq:error_bound} can be achieved
\correction{for every $\ell \in \mathbb N$ by choosing $G_\ell = \update{\tilde G_{h_\ell}}$ and $\gamma = 2^{-\alpha}$, provided $h_0 \le C_{\max}^{-1/\alpha}$.} 
The constant $q$ in \eqref{eq:cost1} depends on the
(physical) dimension of the problem, the order of accuracy of the
underlying numerical method, and the choice of deterministic
solver. The constant $c_0$ depends on the distribution of the constant $C(\omega)$. 
A similar choice is possible in the case of \correction{finite element methods with} locally adaptive mesh refinement
driven by an a posteriori error estimator. In that case, the bound in 
\eqref{uniform_bound} can be replaced by a sharper bound in
terms of the number of mesh elements, cf.\ \cite{BeRa96}. 

{\update{Finally, we mention that both Assumption~\ref{ass:error_estimate1} and Assumption~\ref{ass:selective_refinement} are formulated as $\mathbb P$-almost sure conditions, and we are aware of the fact that in practical scenarios this may be very restrictive. It is of great interest to relax this assumption and to allow numerical error bounds that hold only with high probability or in an $L^p$-sense instead of $\mathbb P$-almost surely. However, this relaxation would lead to non-trivial changes in the complexity analysis presented in Section~\ref{sec:complexity}. Therefore, we will leave this extension for future work.}}

\section{Adaptive Subset Simulation and Selective Refinement} 
\label{s:rare}

In this section, we explore in detail the estimation of the failure probability $P$ given by \eqref{eq:P}, {and propose two new extensions of the classical subset simulation approach.}

Let $\widehat{P}$ denote an estimator of $P$.  There are several sources of error in this 
estimator. These include: systematic error due to the choice of mathematical model, numerical error due to model 
approximation, and statistical error due to finite sampling size. 
Here, we will assume the model 
is 
exact and will only consider how to control the numerical and the statistical errors.  We measure the quality 
of the estimator $\widehat P$
by \correction{using the relative root mean squared error (rRMSE)} 
\begin{equation}\label{eq:def_cov}
\delta\big(\widehat P\big) =
\frac1{P}\sqrt{\correction{\mathbb E}[\big(\widehat P-P\big)^2}]
 \end{equation}
and we let $\text{TOL}>0$ be the desired accuracy for $\delta\big(\widehat P \big)$. \correction{In subset simulation, the rRMSE is often referred to as the coefficient of variation (c.o.v.), see e.g.~\cite{AB01}.}

\subsection{Estimating rare event probabilities by standard Monte Carlo}

The most basic way of estimating the probability in 
\eqref{eq:P} to an accuracy $\text{TOL}$ is to use a 
standard MC method where all samples are computed with a numerical approximation to accuracy $\mathcal{O}(\text{TOL})$. 
Let $F^L:=\{\omega\in\Omega:G_L(\omega)\leq 0\}$ be the approximate failure set on a fixed numerical discretisation level $L$, and consider the standard MC estimator 
\begin{equation}
  \widehat P^{\text{MC}} =\frac{1}{N}\sum_{i=1}^N \mathbbm{1}_{F^L} (\omega_i) = \frac{1}{N}\sum_{i=1}^N \mathbbm{1}_{(-\infty, 0]}(G_L^{(i)}) \,,
\end{equation}
where $\omega_i\in\Omega$ are independent and identically distributed (i.i.d.) samples and each $G_L^{(i)} = G_L(\omega_i)$ 
is computed to accuracy $\gamma^L$. This is an unbiased
estimator for 
\begin{equation}
\label{eq:PL}
P_L=\Pr\left(F^L\right)=\int_\Omega\mathbbm{1}_{F^L} (\omega)\Pr(\domega) = \correction{\mathbb E}\left[\widehat 
P^{\text{MC}}\right],
\end{equation}
and we can expand
\begin{equation}\label{expand_del}
\begin{split}
\delta\big(\widehat P^{\text{MC}}  \big)^2 &= {\frac{\correction{\mathbb E\left[(\widehat P^{\text{MC}})^2\right]}-2\correction{\mathbb E\left[\widehat P^{\text{MC}}\right]} P+ P^2}{P^2}}\\
&= {\frac{\correction{\mathbb E\left[(\widehat P^{\text{MC}})^2\right]}-\correction{\mathbb E\left[\widehat P^{\text{MC}}\right]}^2 + \correction{\mathbb E\left[\widehat P^{\text{MC}}\right]}^2 - 2\correction{\mathbb E\left[\widehat P^{\text{MC}}\right]} P+ P^2}{P^2}}\\
 &= \Bigg(\frac{ \correction{\mathbb E\left[\widehat P^{\text{MC}} - P\right]}}{P}\Bigg)^2 + \frac{\correction{\mathbb V}\left[\widehat P^{\text{MC}} \right]}{P_L^2} {\frac{P_L^2}{P^2}} \  = \ \left(1 -
  \frac{P_L}{P}\right)^2 + \frac{1-P_L}{N P_L}{\frac{P_L^2}{P^2}}\,,
 \end{split}
\end{equation}
which
includes a bias error due to the numerical
approximation $G_L \approx G$. \correction{The expectation and the variance of $\widehat P^{\text{MC}}$ in \eqref{expand_del} are with respect to the joint distribution of the samples $G_L^{(i)}$, in the i.i.d.~case here, the $N$-fold product measure of the distribution of $G_L$.}  As shown in
  \cite[Sec.~3]{EHM14}, it follows from
  Assumption~\ref{ass:selective_refinement} that there exists a constant $c_1>0$ such that
  \[
\left( 1 - \frac{P_L}{P} \right)^2 \le c_1^2 \gamma^{2L} \,. 
\]
\correction{To ensure that the first term on the
right hand side of \eqref{expand_del} is less than $\text{TOL}^2/2$} it suffices that
\begin{equation}
\label{def:L}
\correction{L 
\ge \log\left(\sqrt{2} c_1 \text{TOL}^{-1}\right)}\,.
\end{equation}
We will assume this throughout the paper.  

\correction{We note that the constant $c_1$ does in general depend on the underlying distribution of $G$ and $G_L$, in particular, on the rareness of the event and on the gradient of $G$ near} the boundary of the failure domain. \correction{However, $L$ only grows logarithmically with $c_1$ and this contribution to the rRMSE is the same for all methods considered.} We refer to \cite{WLPE2021} for a detailed analysis of approximation errors for rare event probabilities in the context of PDE based models.

The main challenge in achieving the required accuracy for
$\delta\big(\widehat P^{\text{MC}}  \big)$, is to ensure that 
the second term on the right 
hand side of \eqref{expand_del} is sufficiently small such that
\begin{equation}
  \frac{1-P_L}{N P_L} \frac{P_L^2}{P^2}\leq  \frac{\text{TOL}^2}{2}\,.
\end{equation}
A sufficient condition for this to hold is
\begin{equation}
   N \propto \textrm{TOL}^{-2} P_L^{-1}(\correction{P_L/P})^2 \,.
\end{equation}
Hence, the number of samples needs to be proportional to the inverse
of the rare event probability. For realistic applications, where the cost
$\cost{G_L} \gg 1$ and $P_L \approx P\ll1$, this is completely
infeasible. Under Assumption~\ref{ass:error_estimate1} and choosing
$L$ as in \eqref{def:L}, the total cost of the standard MC estimator would be
\begin{equation}
\label{eq:cost_MC}
  \correction{\mathbb E\left[\costb{\widehat P^{\text{MC}}}\right]}= N \correction{\mathbb E}\left[\cost{G_L}\right] \le  N c_0
  \gamma^{-Lq} 
   \le c_2 \text{TOL}^{-(2+q)} P_L^{-1}\correction{\frac{P_L^2}{P^2}}\,,
\end{equation}
{for some constant $c_2>0$ which is independent of $\text{TOL}$.}
It is possible to improve this through importance sampling techniques 
\cite{SCHUELLER1987293,doi:10.1061,Bucher1988119}, but that requires
some a priori knowledge of the distribution of $G_L$, which we
typically do not have.  Further note that the factor $\correction{P_L/P}$ is close to $1$,  since $L$ was chosen large enough such that $\correction{P_L/P \in (1-\text{TOL}/\sqrt{2},1+\text{TOL}/\sqrt{2})}$.

\subsection{Subset Simulation}
\label{sec:subset}

{In engineering applications, subset} simulation \cite{AB01,AB03,PaWoKiZwSt14} is one of the most widespread
variance reduction techniques to design an efficient estimator for $P$ in Equation~\eqref{eq:P}. It has been successfully used in many different contexts and for different applications, which include engineering reliability analysis \cite{AuBook}, robust design \cite{Au05}, topology optimisation \cite{QiLu11}, multi-objective optimisation \cite{XiXi17}, Bayesian inference \cite{DiGa17} and model calibration through history matching \cite{Gong2021}. 

The main idea is to define a sequence of nested 
failure sets that 
contain the target failure set $F$, as in~\eqref{nested_events}.
This is accomplished using a sequence of intermediate failure thresholds
\begin{equation}
  0 = y_K < y_{K-1} < \dots < y_0 = \infty.
\end{equation}
That way, each intermediate failure set is defined as 
\[
F_j:=\{\omega\in\Omega:G(\omega)\leq y_j\} = \{ G\in (-\infty, y_j]\}, \quad \text{for} \ \ j = 1,\ldots,K.
\]
The failure set $F$ and some intermediate failure sets that contain it are illustrated in 
Fig.~\ref{fig:classical_subset} via level curves of $G(\omega)$.
As stated in \eqref{subset_prod}, the rare event probability $\Pr(F)$ can be expressed
as product of conditional probabilities, i.e., 
$\Pr(F) = \prod_{j=1}^K\Pr(F_j|F_{j-1})$,
where each $\Pr(F_j|F_{j-1})$ is by construction larger than $\Pr(F)$ \correction{and thus less rare} -- significantly so, if $K$ is sufficiently large \correction{and the $y_j$ are chosen appropriately}.
\begin{figure}[t]
\centering
\includegraphics[width=0.5\textwidth]{./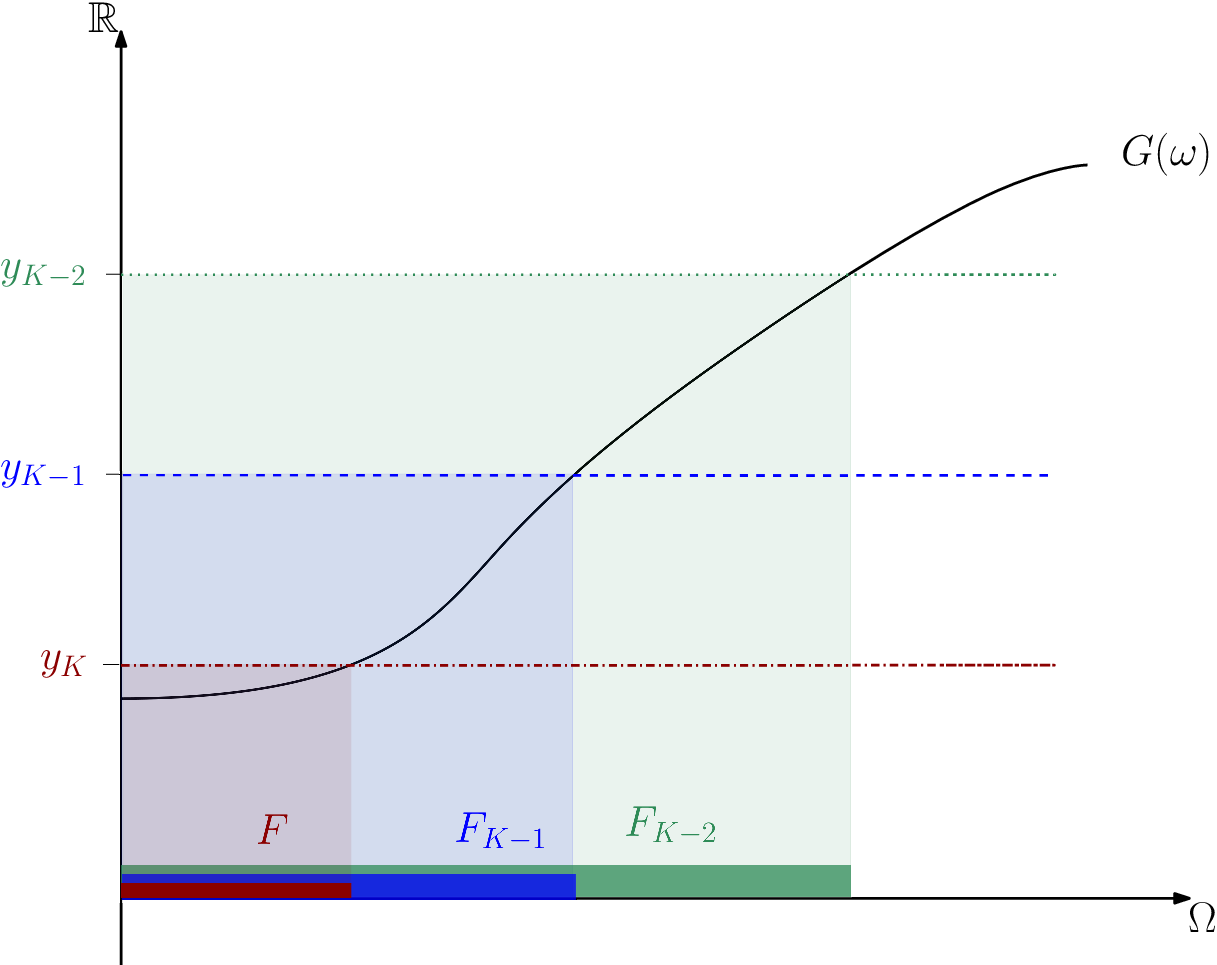}
  \caption{A cartoon of 
  the limit state function $\omega\mapsto G(\omega)$ and the resulting failure sets.
  }
    \label{fig:classical_subset}
\end{figure}

To estimate $\Pr(F)$ from the product of conditional probabilities we need to 
compute
\begin{equation}\label{eq:cond_prob}
\Pr(F_j|F_{j-1}) = \int_{\mathbb R} \mathbbm{1}_{(-\infty, y_j]}(x) \pi_{j-1}(x) \dx, \quad \text{for} \ \ j = 1,\ldots,K,
\end{equation}
where $\pi_{j-1}$ is the pdf of $G$ conditioned on the event $G\le y_{j-1}$, i.e.~
\begin{equation}\label{eq:cond_dens}
	\pi_{j-1}(x) =  \frac{\pi(x)\mathbbm{1}_{(-\infty, y_{j-1}]}(x)}{Z} \quad \text{with} \quad Z := \int_{-\infty}^{y_{j-1}} \pi(x)\,\dx \,.
\end{equation}
The standard MC estimator for the integral in \eqref{eq:cond_prob} is given by
\begin{equation}
   \Pr(F_j|F_{j-1}) \approx \frac{1}{N}\sum_{i=1}^N  \mathbbm{1}_{(-\infty, y_j]}(G^{(i)}),\quad 
G^{(i)} \sim \pi_{j-1}.
\end{equation}
In general, we cannot generate i.i.d.\ samples from $\pi_{j-1}$ directly, at
least not efficiently. In order to circumvent this, MCMC methods are commonly employed, 
see Section~\ref{sec:MCMC} below.

\subsection{Subset Simulation with Selective Refinement}
\label{sec:subset_select}

In practice we also need to take into account the numerical approximations of the failure domains. Instead of having a fixed computational mesh for all
samples, which is the typical approach in the literature, we follow \cite{EHM14} and use a
sample-dependent approximation $G_L(\omega)$ that guarantees 
instead that the error in the limit state function satisfies either the bound \eqref{eq:error_bound} in
Assumption~\ref{ass:error_estimate1} or the weaker bound \eqref{eq:error_bound2} in Assumption~\ref{ass:selective_refinement}. 

Firstly, in the case of Assumption~\ref{ass:error_estimate1}, we use the sequence of intermediate failure sets
\begin{equation}\label{def:fail_sets}
F^L_j:=\{\omega: G_L(\omega)\leq y_j\}, \quad \text{for} \ \
\infty = y_0
> \dots > y_{K-1} > y_K = 0\,,
\end{equation}
which obviously fulfill the critical subset property
\begin{equation}
F^L = F^L_K\subset F^L_{K-1}\subset \dots \subset F^L_1 \subset F_0^L = \Omega.
\end{equation}
In the case of the selective refinement strategy, i.e., under Assumption~\ref{ass:selective_refinement}, the following sequence of intermediate failure sets is chosen:
\begin{equation}\label{def:fail_sets_select}
F^L_j:=\{\omega: G_L^{y_j}(\omega)\leq y_j\}, \quad \text{for} \ \
\infty = y_0
> \dots > y_{K-1} > y_K = 0\,.
\end{equation}
The subset property can be guaranteed again if the failure thresholds are sufficiently far apart.
\begin{lemma}\label{lem:subset0} Let $j\in\{1,\dots,K\}$. Suppose that the random variable $G_L$ satisfies Assumption~\ref{ass:selective_refinement} and that $y_{j-1}-y_j\ge 2\gamma^L$.
Then 
\[
F^L_j=\{\omega: G_L^{y_j}(\omega)\leq y_j\}\subset \{\omega: G_L^{y_{j-1}}(\omega)\leq y_{j-1}\} = F^L_{j-1}.
\]
\end{lemma}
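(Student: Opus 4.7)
The plan is to show the implication $\omega \in F_j^L \Rightarrow \omega \in F_{j-1}^L$ by first using the selective-refinement bound at threshold $y_j$ to control the true limit-state value $G(\omega)$ from above, and then using the same bound at threshold $y_{j-1}$ to control $G_L^{y_{j-1}}(\omega)$ from above. The $2\gamma^L$ gap between successive thresholds will be needed precisely because each of the two applications of Assumption~\ref{ass:selective_refinement} introduces an error of size at most $\gamma^L$.

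First, I would isolate the following one-sided statement: if $G_L^y(\omega) \le y$, then $G(\omega) \le y + \gamma^L$. This is immediate from \eqref{eq:error_bound2}: since $G_L^y(\omega) \le y$, the bound reads $|G(\omega) - G_L^y(\omega)| \le \max(\gamma^L, y - G_L^y(\omega))$, so
\[
 G(\omega) \le G_L^y(\omega) + \max\bigl(\gamma^L,\ y - G_L^y(\omega)\bigr) \le \max\bigl(G_L^y(\omega) + \gamma^L,\ y\bigr) \le y + \gamma^L.
\]
Applied with $y = y_j$ and $\omega \in F_j^L$, together with the hypothesis $y_{j-1} - y_j \ge 2\gamma^L$, this gives $G(\omega) \le y_j + \gamma^L \le y_{j-1} - \gamma^L$.

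Second, I would establish the converse one-sided statement: if $G(\omega) \le y - \gamma^L$, then $G_L^y(\omega) \le y$. The natural approach here is a contradiction, since the right-hand side of \eqref{eq:error_bound2} depends on $G_L^y(\omega)$ itself. Assume $G_L^y(\omega) > y$. Then \eqref{eq:error_bound2} becomes $|G(\omega) - G_L^y(\omega)| \le \max(\gamma^L, G_L^y(\omega) - y)$, and I would split into two cases. If the maximum equals $\gamma^L$, then $G_L^y(\omega) \le G(\omega) + \gamma^L \le y$, contradicting $G_L^y(\omega) > y$. If instead the maximum equals $G_L^y(\omega) - y$, then $G_L^y(\omega) - G(\omega) \le G_L^y(\omega) - y$, i.e.\ $G(\omega) \ge y > y - \gamma^L$, again a contradiction. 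Applying this with $y = y_{j-1}$ to the bound on $G(\omega)$ obtained above yields $G_L^{y_{j-1}}(\omega) \le y_{j-1}$, i.e.\ $\omega \in F_{j-1}^L$.

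The only real obstacle is that the upper bound in \eqref{eq:error_bound2} depends nonlinearly on $G_L^y(\omega)$ through the max, so one cannot simply chain $G_L^{y_j} \to G \to G_L^{y_{j-1}}$ by triangle inequality. The case-analysis (or equivalently contradiction) in the second step handles exactly this subtlety, and the required $2\gamma^L$ spacing is seen to be sharp, one $\gamma^L$ absorbing the error in each direction of the chain.
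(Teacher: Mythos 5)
Your proof is correct and rests on the same core mechanism as the paper's: chaining $G_L^{y_j}\le y_j \Rightarrow G \le y_j+\gamma^L \Rightarrow G_L^{y_{j-1}}\le y_{j-1}$, with a case split on which term attains the max in \eqref{eq:error_bound2} and the $2\gamma^L$ gap absorbing one $\gamma^L$ in each link. The paper organizes this as a single contradiction with four sub-cases, whereas your two one-sided implications (the first handled directly, only the second by contradiction) give a cleaner, more modular presentation of essentially the same argument.
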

\begin{proof} 
\correction{First note that by convention $F_0 = \Omega$ in which case the subset property is always satisfied. Next, we assume that $j\ge 2$ and}
we will give a proof by contradiction. Fix $\omega\in\Omega$ such that $G_L^{y_j}(\omega)\le y_j$ and $G_L^{y_{j-1}}(\omega) > y_{j-1}$.\medskip

\noindent
\textit{Case 1:} Assume that $|G_L^{y_j}(\omega) -G(\omega)|>\gamma^L$, i.e.~\[\correction{|G_L^{y_j}(\omega) -G(\omega)|\le}\max(|\correction{G_L^{y_{j}}}(\omega) -y_j|,\gamma^L) = y_j - G_L^{y_j}(\omega).\] 
If $|G_L^{y_{j-1}}(\omega) - G(\omega)| \le \gamma^L$ then
\begin{align*}
G_L^{y_{j-1}}(\omega) = G_L^{y_{j-1}}(\omega) - G(\omega) + G(\omega)&\correction{\le \gamma^L + G(\omega)}\\
&\correction{=\gamma^L + G(\omega)- G_L^{y_j}(\omega) +G_L^{y_j}(\omega)}\\ 
&\correction{\le \gamma^L +  |G(\omega)- G_L^{y_j}(\omega)| +G_L^{y_j}(\omega)}\\
&\correction{\le\gamma^L + y_j - G_L^{y_j}(\omega)+G_L^{y_j}(\omega)}\\
&= \gamma^L+y_j \le y_{j-1}, 
\end{align*}
which contradicts $G_L^{y_{j-1}}(\omega) > \correction{y_{j-1}}$. On the other hand, if $|G_L^{y_{j-1}}(\omega) - G(\omega)| > \gamma^L$ then
\[\correction{|G_L^{y_{j-1}}(\omega) - G(\omega)|\le \max(|G_L^{y_{j-1}}(\omega) - y_{j-1}|,\gamma^L)=G_L^{y_{j-1}}(\omega) - y_{j-1} } \]
\correction{and therefore}
\begin{align*}
G_L^{y_{j-1}}(\omega) = G_L^{y_{j-1}}(\omega) - G(\omega) + G(\omega)&\correction{\le |G_L^{y_{j-1}}(\omega) - G(\omega)| + G(\omega)} \\
&\correction{\le G_L^{y_{j-1}}(\omega) - y_{j-1}+G(\omega)}\\
&\correction{= G_L^{y_{j-1}}(\omega) - y_{j-1}+G(\omega)- G_L^{y_j}(\omega) +G_L^{y_j}(\omega)}\\
&\le G_L^{y_{j-1}}(\omega) - y_{j-1} +y_j < G_L^{y_{j-1}}(\omega),
\end{align*}
which is a contradiction in itself.\pagebreak

\noindent
\textit{Case 2:} Now, assume that $|G_L^{y_j}(\omega) -G(\omega)|\le\gamma^L$, i.e.~
\[\correction{|G_L^{y_j}(\omega) -G(\omega)|\le} \max(|\correction{G_L^{y_{j}}}(\omega) -\correction{y_j}|,\gamma^L) = \gamma^L.\] 
Now, if $|G_L^{y_{j-1}}(\omega) - G(\omega)| \le \gamma^L$ then
\[ 
G_L^{y_{j-1}}(\omega) = G_L^{y_{j-1}}(\omega) - G(\omega) + G(\omega)-G_L^{y_j}(\omega)+G_L^{y_j}(\omega) \le 2\gamma^L + y_j \le y_{j-1}, 
\]
which contradicts $G_L^{y_{j-1}}(\omega) > \correction{y_{j-1}}$. If again $|G_L^{y_{j-1}}(\omega) - G(\omega)| > \gamma^L$, then 
\begin{equation*}
\begin{split}
 G_L^{y_{j-1}}(\omega) &= G_L^{y_{j-1}}(\omega) - G(\omega) + G(\omega)-G_L^{y_j}(\omega) +G_L^{y_j}(\omega)\\ &\le G_L^{y_{j-1}}(\omega) - y_{j-1}+ \gamma^L +y_j < G_L^{y_{j-1}}(\omega),
 \end{split}
\end{equation*}
due to the assumed \correction{upper} bound on $y_j - y_{j-1}$, which is again a contradiction in itself. 
\end{proof}

Hence, in both cases the numerical approximation of the rare event probability $P_L = \Pr(F^L)$ on level $L$ can be written as a product of intermediate failure set probabilities as
\begin{equation}\label{eq:subset_est}
  P_L = \prod_{j = 1}^K\Pr(F^L_j \mid F^L_{j-1}).
\end{equation}
Finally, given estimators $\widehat P_j$ for $\Pr\big(F^L_j\mid
  F^L_{j-1}\big)$, we define the subset simulation estimator as
\begin{equation}
  \widehat P^{\mathrm{\mathrm{SuS}}} = \prod_{j=1}^K \widehat P_j.
\end{equation}
In general, this is a biased estimator for $P_L$, but it can be shown that it is asymptotically unbiased~\cite{AB01}. We will return to this in Section \ref{sec:MCMC}. 

\correction{As stated already in Remark \ref{rem:notation}, for simplicity} we will omit the dependence on $y_j$ and write \correction{in the following $G_\ell(\omega)$ instead of $G_\ell^{y_j}(\omega)$, also in the case of selective refinement when using Assumption~\ref{ass:selective_refinement}}.

\subsection{Adaptive Multilevel Subset Simulation}
\label{sec:multilevel}

We will now go one step further and consider the sequence of failure sets 
\[
F^{\text{ML}}_j=\{\omega: G_{\ell_j}(\omega)\leq
y_j\}, \quad j=1,\dots,K,
\]
where each $G_{\ell_j}$ is computed only to tolerance $\gamma^{\ell_j}$ with $\ell_j \le L$, and $y_j$ and $\ell_j$ are adaptively chosen. Typically, we assume $\ell_j \ge \ell_{j-1}$ and $\ell_K = L$. \correction{We consider both the cases without and with selective refinement, i.e., $G_{\ell_j}(\omega)$ or $G^{y_j}_{\ell_j}(\omega)$, respectively.}

Such a multilevel strategy was also at the heart of \cite{UP2015}, but there the thresholds $y_j$ were chosen to roughly balance the contributions from the individual subsets, i.e., $\widehat P_1 \approx \widehat P_2 \approx \ldots \approx \widehat P_K$, as in classical subset simulation. This reduces the number of expensive fine resolution samples only by a linear factor $\mathcal{O}(K)$. Furthermore, without any further conditions the crucial subset property $F^{\text{ML}}_{j}\subset F^{\text{ML}}_{j-1}$ cannot be guaranteed. Thus, one focus of \cite{UP2015} was to estimate the correction factor that arises due to the loss of the subset property. 

To circumvent this problem, in the following lemma we propose
a method whereby the thresholds $y_j$ are chosen adaptively, to maintain the subset property. Crucially, we exploit here the sample-wise error bounds in Assumptions~\ref{ass:error_estimate1} and \ref{ass:selective_refinement}.

\begin{lemma}
  \label{lem:subset}
  Consider the sequence $F^{\text{ML}}_j,\ j=1,\dots,K,$ 
  where either Assumption~\ref{ass:error_estimate1} or Assumption~\ref{ass:selective_refinement} is satisfied. Let 
  $y_K = 0$ and $y_0 = \infty$, and choose 
  \[
  y_{j}=
  y_{j+1} + (\gamma^{\ell_{j}} + \gamma^{\ell_{j+1}}) > 0, \quad \text{for }\ j=K-1,\ldots,1. 
  \]
  Then, the subset property
  $F^{\text{ML}}_{j+1}\subset F^{\text{ML}}_{j}$ holds. 
\end{lemma}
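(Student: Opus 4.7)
The plan is to fix an arbitrary $\omega\in F^{\text{ML}}_{j+1}$, i.e.\ $G_{\ell_{j+1}}(\omega)\le y_{j+1}$, and show $G_{\ell_j}(\omega)\le y_j$. Under Assumption~\ref{ass:error_estimate1} this is essentially one line: the uniform error bound applied at both levels together with the triangle inequality gives
\[
G_{\ell_j}(\omega) \le G(\omega)+\gamma^{\ell_j} \le G_{\ell_{j+1}}(\omega)+\gamma^{\ell_{j+1}}+\gamma^{\ell_j} \le y_{j+1}+\gamma^{\ell_{j+1}}+\gamma^{\ell_j} = y_j,
\]
so I would dispose of that assumption first and devote the rest of the proof to the selective refinement case.

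Under Assumption~\ref{ass:selective_refinement} the approach is a two-stage case analysis directly modelled on the previous lemma in the paper, exploiting that the error bound at level $\ell$ with threshold $y$ is the maximum of $\gamma^\ell$ and $|G_{\ell}^y(\omega)-y|$. Stage one: extract an upper bound on $G(\omega)$ from $G_{\ell_{j+1}}(\omega)\le y_{j+1}$ by splitting on whether the strong error $|G(\omega)-G_{\ell_{j+1}}(\omega)|\le\gamma^{\ell_{j+1}}$ holds or not. In the first subcase we immediately get $G(\omega)\le y_{j+1}+\gamma^{\ell_{j+1}}$. In the second, $|G(\omega)-G_{\ell_{j+1}}(\omega)|\le|G_{\ell_{j+1}}(\omega)-y_{j+1}|=y_{j+1}-G_{\ell_{j+1}}(\omega)$, from which I read off $G(\omega)\le y_{j+1}$ directly. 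Either way, $G(\omega)\le y_{j+1}+\gamma^{\ell_{j+1}}$.

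Stage two: combine this with the error bound at level $\ell_j$ to conclude $G_{\ell_j}(\omega)\le y_j$. Again a split on whether $|G(\omega)-G_{\ell_j}(\omega)|\le\gamma^{\ell_j}$: if it holds, then $G_{\ell_j}(\omega)\le G(\omega)+\gamma^{\ell_j}\le y_{j+1}+\gamma^{\ell_{j+1}}+\gamma^{\ell_j}=y_j$, finishing the argument. Otherwise the weak bound $|G(\omega)-G_{\ell_j}(\omega)|\le|G_{\ell_j}(\omega)-y_j|$ is active; here I would argue by contradiction, supposing $G_{\ell_j}(\omega)>y_j$. Then $|G_{\ell_j}(\omega)-y_j|=G_{\ell_j}(\omega)-y_j$ and the inequality $G_{\ell_j}(\omega)-G(\omega)\le G_{\ell_j}(\omega)-y_j$ forces $G(\omega)\ge y_j>y_{j+1}+\gamma^{\ell_{j+1}}$, contradicting stage one.

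I expect the main obstacle to be bookkeeping in the subcase where the weaker error bound is active at both levels: one has to be careful that $|G_{\ell_j}(\omega)-y_j|$ is not known a priori to equal $y_j-G_{\ell_j}(\omega)$, and a naive triangle inequality chain may be circular. The contradiction argument sketched above sidesteps this cleanly, and the gap $y_j-y_{j+1}=\gamma^{\ell_j}+\gamma^{\ell_{j+1}}$ is exactly what is needed to close the bound in every subcase, mirroring the $2\gamma^L$ gap used in the previous lemma.
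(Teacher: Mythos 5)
Your proof is correct and follows essentially the same route as the paper: the Assumption~\ref{ass:error_estimate1} case is the identical one-line triangle-inequality argument, and the selective-refinement case is the same case-split-on-the-max argument that the paper handles by simply citing its earlier single-level lemma (``follows directly since $y_{j-1}-y_j\ge 2\gamma^{\ell_j}$''). If anything, your two-stage version is slightly more careful than the paper's deferral, since it explicitly accommodates the two different levels $\ell_j\neq\ell_{j+1}$ and shows the gap $\gamma^{\ell_j}+\gamma^{\ell_{j+1}}$ is exactly sufficient without needing $\ell_{j+1}\ge\ell_j$.
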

\begin{proof}
First suppose that Assumption~\ref{ass:error_estimate1} holds. Then 
\begin{equation}
    |G_{\ell_j}(\omega)-G_{\ell_{j+1}}(\omega)|\leq |G(\omega)-G_{\ell_j}(\omega)| + 
|G(\omega)-G_{\ell_{j+1}}(\omega)|\leq (\gamma^{\ell_j}+\gamma^{\ell_{j+1}}),
  \end{equation}
  and hence, if $G_{\ell_{j+1}}(\omega)\leq y_{j+1}$ then 
  \begin{equation}
    G_{\ell_j}(\omega)\leq G_{\ell_{j+1}}(\omega) + (\gamma^{\ell_j}+\gamma^{\ell_{j+1}})\leq y_{j+1}+ 
(\gamma^{\ell_j}+\gamma^{\ell_{j+1}})=y_j,
  \end{equation}
  which concludes the proof for Assumption~\ref{ass:error_estimate1}.
  
  The proof for Assumption~\ref{ass:selective_refinement} then follows directly since $y_j-y_{j-1}\ge 2\gamma^{\ell_j}$.
\end{proof}

For simplicity, we assume that $K = L$ and that $\ell_j = j$ for all $j=1,\dots,K$. The rare event probability can then be written as
\begin{equation}\label{eq:MLdecomp}
  \Pr(F^{\text{ML}}_L) = \prod_{\ell = 1}^L\Pr(F^{\text{ML}}_{\ell}\mid F^{\text{ML}}_{\ell-1}),
\end{equation}
where $F^{\text{ML}}_\ell = \{ \omega: G_{\ell}(\omega)\leq y_\ell\}$. However, this does not preclude us from using more than $L$ subsets. If the first intermediate failure set $F^{\text{ML}}_1$ is still a rare event, we can estimate $\Pr(F^{\text{ML}}_{1} | F^{\text{ML}}_{0}) = \Pr(F^{\text{ML}}_{1})$ using classical subset simulation with an additional $K_1$ subsets instead of plain MC, but with all evaluations of the limit state function on those additional $K_1$ subsets only computed to an accuracy of $\gamma^1$ in Assumptions~\ref{ass:error_estimate1} and \ref{ass:selective_refinement}. \update{This may also be necessary for intermediate failure probabilities $P(F_j | F_{j-1})$ if they happen to be very small.}

Let $\widehat P_1$ be an estimator for $\Pr(F^{\text{ML}}_1)=\Pr(F^{\text{ML}}_1|F^{\text{ML}}_0)$ using standard MC or classical
subset simulation on discretisation level $\ell=1$. For $\ell>1$ we assume as above that we are given estimators
$\widehat P_\ell$ for $\Pr\left(F^{\text{ML}}_\ell\mid F^{\text{ML}}_{\ell-1}\right)$ that will be constructed by 
MCMC sampling in the following. 
We define the multilevel subset simulation estimator by
\vspace{-0.25cm}
\begin{equation}\label{eq:ML_estimator}
 \widehat 
P^{\mathrm{ML}} = \widehat P_1\prod_{\ell=2}^L \widehat P_\ell.\vspace{-0.25cm}
\end{equation}
In practice, we propose to apply classical subset simulation for the estimation of $\widehat P_1$. 

{An illustration of the idea behind Lemma \ref{lem:subset} is given in Figure
\ref{fig:subset_new}. An additional benefit of the choice of thresholds in Lemma \ref{lem:subset} is that 
\correction{the difference in the intermediate failure thresholds $y_\ell$} shrinks geometrically as $\ell$ increases. See Figure~\ref{fig:subset} for an illustration. Thus, \correction{by continuity of $\mathbb P$ it follows that $\mathbb P(F_\ell)\to \mathbb P(F)$, and therefore also} $\Pr(F^{\text{ML}}_{\ell}\mid F^{\text{ML}}_{\ell-1}) \to 1$ as $\ell \to \infty$. \correction{This effect} 
reduces the variance on the latter subsets that have to be computed to higher accuracy. As a consequence, significantly fewer samples have to be computed at high accuracy reducing the overall complexity of the estimator dramatically. We will return to this point in Section \ref{sec:complexity}.}

\begin{figure}[t]
\centering \includegraphics[width=0.49\textwidth]{./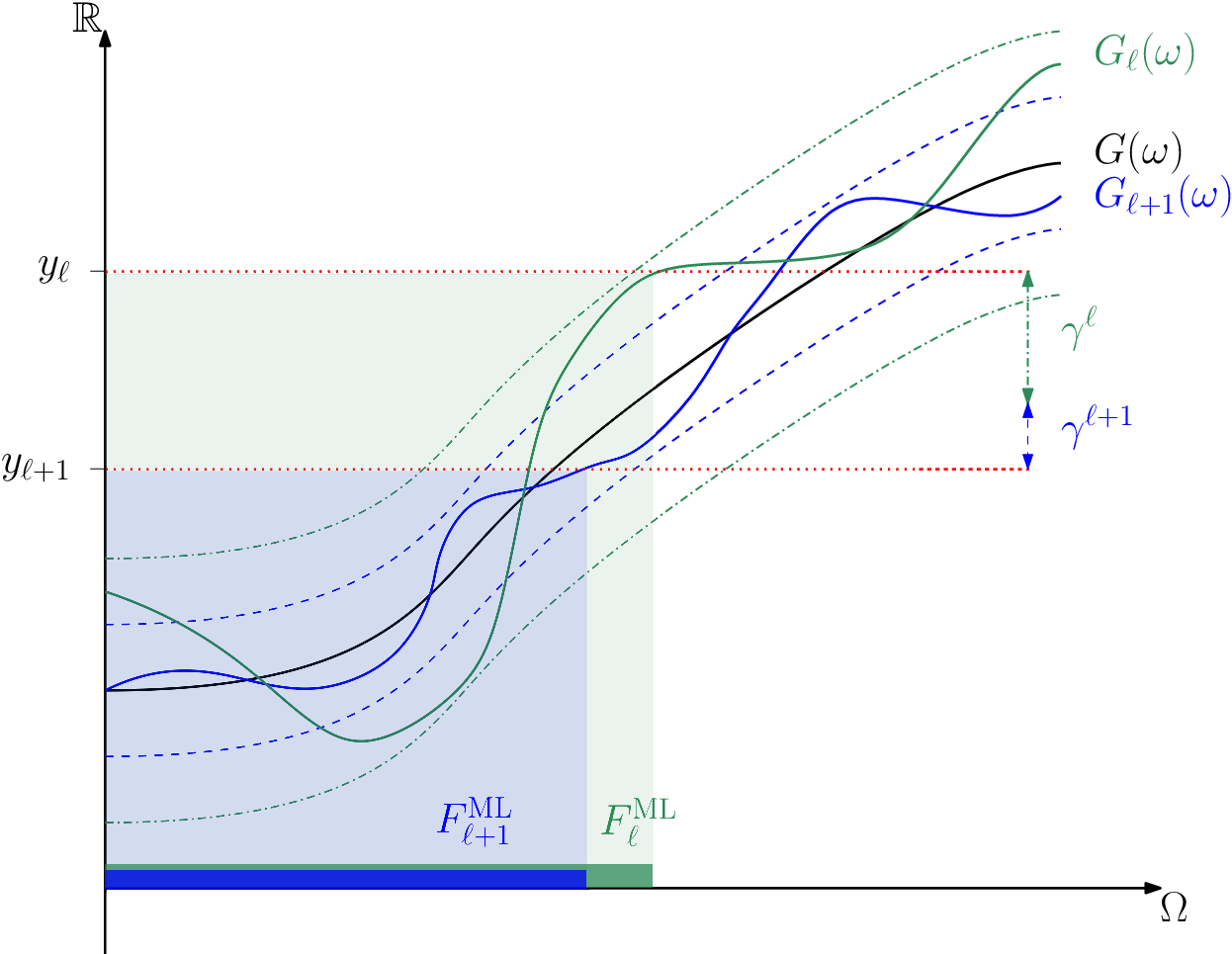}~~\includegraphics[width=0.49\textwidth]{./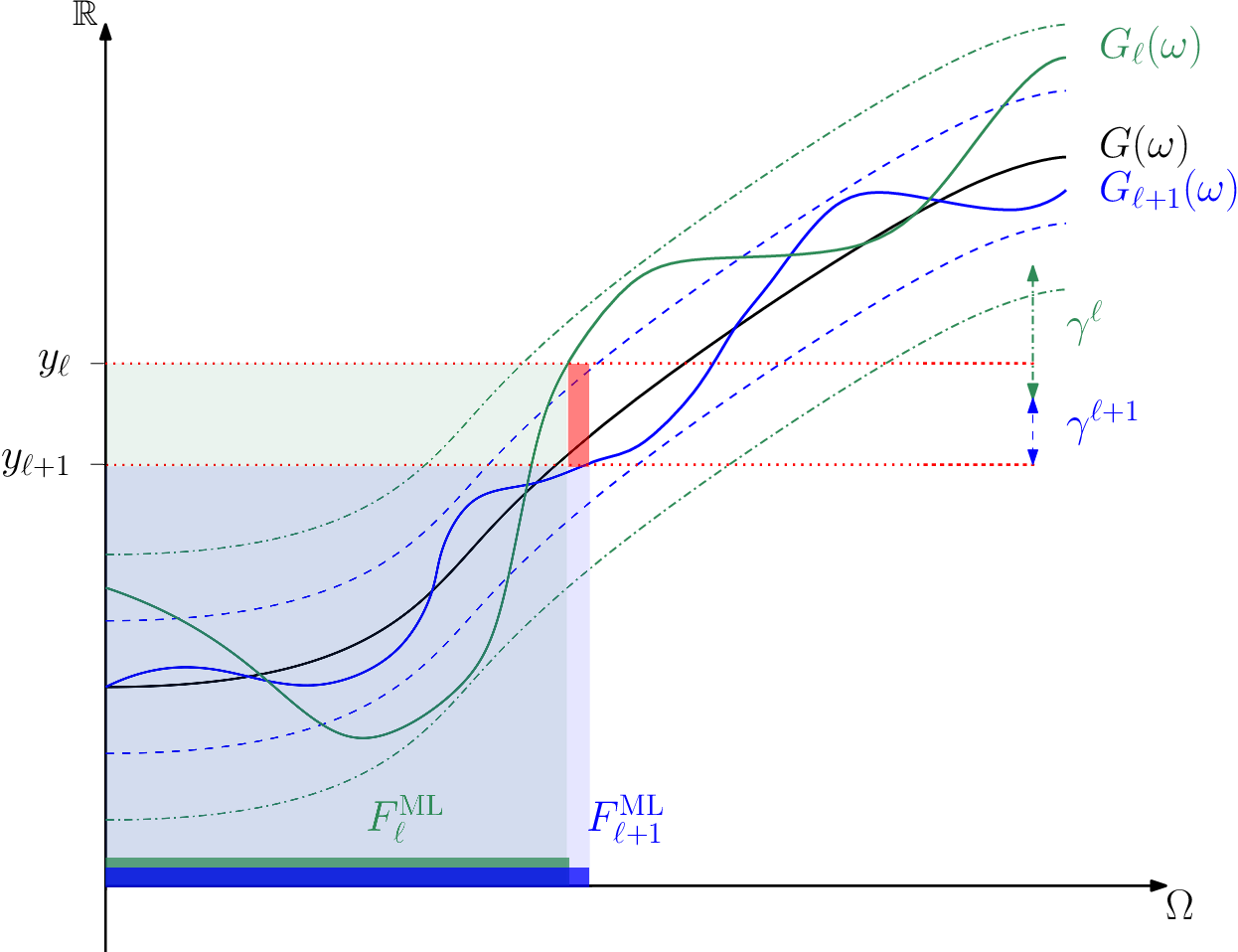}
\caption{Illustration of the choice of failure thresholds in Lemma~\ref{lem:subset}, which guarantee the subset property 
$F^{\text{ML}}_{\ell+1}\subset F^{\text{ML}}_\ell$. It holds,
when $y_{\ell}-y_{\ell+1}=\gamma^{\ell+1}+\gamma^{\ell}$ and $|G_{\ell}(\omega)-G_{\ell+1}(\omega)|\leq 
  \gamma^{\ell+1}+\gamma^{\ell}$ as chosen in Lemma \ref{lem:subset} (left plot). If $y_{\ell}-y_{\ell+1}<\gamma^{\ell+1}+\gamma^{\ell}$, then Lemma~\ref{lem:subset} does not apply. In particular, for too small choices of $y_{\ell}-y_{\ell+1}$ we may find $\omega\in\Omega$ such that $G_{\ell+1}(\omega)\le y_{\ell+1}$, but $G_{\ell}(\omega)> y_{\ell}$, as illustrated by the red zone (in the right plot).}\label{fig:subset_new}
\end{figure} 

\begin{figure}[t]
  \centering
  \tikzstyle{int}=[draw, fill=blue!20, minimum size=2em]
  \tikzstyle{init} = [pin edge={to-,thin,white}]
  \begin{tikzpicture}\small
    \draw[-,semithick] (0.5,0) -- (1,0);
    \draw[|-,semithick] (1,0) -- (2.5,0);
    \draw[|-,semithick] (2.5,0) -- (5.5,0);
    \draw[|-,semithick] (5.5,0) -- (10.5,0);
    \draw[|->,semithick] (10.5,0) -- (11,0);
    \draw[<->,semithick] (1,0.2) -- (2.5,0.2);
    \draw[<->,semithick] (2.5,0.2) -- (5.5,0.2);
    \draw[<->,semithick] (5.5,0.2) -- (10.5,0.2);
    \draw (1,-0.5) node {$y_{\ell+3}$};
    \draw (2.5,-0.5) node {$y_{\ell+2}$};
    \draw (5.5,-0.5) node {$y_{\ell+1}$};
    \draw (10.5,-0.5) node {$y_{\ell}$};
    \draw (1.9,0.6) node {$\gamma^{\ell+3}+\gamma^{\ell+2}$};
    \draw (4.2,0.6) node {$\gamma^{\ell+2}+\gamma^{\ell+1}$};
    \draw (8,0.6) node {$\gamma^{\ell+1}+\gamma^{\ell}$};
  \end{tikzpicture}
  \caption{Illustration 
  of the shrinkage of the subsets $F^{\text{ML}}_{\ell}$ with increasing $\ell$. 
  }
  \label{fig:subset}
\end{figure}

\update{The three considered algorithms, namely subset simulation, subset simulation with selective refinement and multilevel subset simulation,
are summarised in Appendix~\ref{app:algorithm}. To estimate the conditional probabilities in \eqref{eq:subset_est} and \eqref{eq:MLdecomp} a MCMC algorithm is used, which will be described in the following section. The different types of subset simulation only differ in the construction of the considered subsets $F_j$.
We have further included an adaptive choice for $N_\ell$ in Algorithm~\ref{alg:MLsubset} in order to verify the required tolerance with respect to the rRMSE~$\delta(\widehat P)$. Sufficient bounds on $\delta(\widehat P)$ are derived in Section \ref{sec:complexity_ss_select}.
A similar choice can also be included in Algorithm~\ref{alg:classicalsus}.}

\section{Shaking Transformations and Asymptotic Convergence}\label{sec:MCMC}

From the above discussion we see that a crucial component of any subset simulation is an efficient estimator for the conditional
probabilities $\Pr(F_j|F_{j-1})$, for $j>1$. In other words, we need to generate samples from
the pdf given in~\eqref{eq:cond_dens}.
Due to the small probability of failure, standard MC sampling is infeasible. Instead, several authors have developed MCMC algorithms \cite{AB01,PaWoKiZwSt14,AuPa16}. 
Here,  we use the general, parallel, one-path (POP) algorithm based on shaking transformations introduced in \cite{GL2015} and further developed in \cite{ADGL2017}.  The key idea is to build one Markov chain,  exploring the space through shaking transformations and moving from the coarsest subset to the finest via a sequence of rejection operators. 

The shaking transformation of a random variable $X$ with respect to a random variable~$\correction{W}$, acting on measurable spaces $(\mathbb X, \mathcal X)$ and \correction{$(\mathbb W,\mathcal W)$}, respectively, can be defined as a measurable mapping $S:\mathbb X\times \correction{\mathbb W}\to \mathbb X$ that satisfies 
\begin{equation}\label{eq:shakingtransf}
(X,S(X,\correction{W})) = (S(X,\correction{W}),X)
\end{equation}
in distribution.  Assuming that the rare event and the corresponding intermediate subsets can be written as events of $X$, i.e.,
\[F = \{X\in A\},\quad F_j= \{X\in A_j\},\quad \text{such that} \quad A,A_j\in\mathcal X,\quad j=1,\dots,K,\]
the shaking transformation will act as the proposal in the constructed MCMC algorithm. 
In order to force the Markov chain to explore the subset $F_j$,  we define the rejection operator
\[M_j^S: \mathbb X\times \correction{\mathbb W}\to \mathbb X,\quad \text{with}\quad M_j^S(x,\correction{w}) = S(x,\correction{w})\mathbbm{1}_{S(x,\correction{w})\in A_j}+x\mathbbm{1}_{S(x,\correction{w})\notin A_j}.\]

The proposed MCMC algorithm in the earlier multilevel subset simulation paper \cite{UP2015} is based on a preconditioned Crank-Nicholson (pCN) \cite{Neal98,CRSW13} proposal which can in fact be written as a shaking transformation and hence the theoretical results in \cite{GL2015} can be applied. The property of being a shaking transformation is related to the detailed balance condition. This observation has been made and verified already in \cite[Thm.~8]{ADGL2017}. The general POP algorithm is given in Algorithm~\ref{alg:POP}.

\begin{algorithm}[t]
\caption{Parallel One-Path algorithm [POP].}
\label{alg:POP}
\begin{algorithmic}[1]
\State Given a seed $X_{0,0}\sim X$
\For{$j=0,\dots,K-1$} 
\For{ $i=0,\dots,N_{j+1}-1$}
\State Generate \correction{$W_{j,i}\sim W$},
\State Shake and accept/reject $X_{j,i+1} = M_j^S(X_{j,i},\correction{W_{j,i}})$
\EndFor
\State Estimate the probability of subset $F_{j+1}$ by\vspace{-1ex} 
\[
\qquad\qquad\qquad
\widehat P_{j+1} = \frac1{N_{j+1}}\sum\limits_{k=0}^{N_{j+1}-1} \mathbbm{1}_{A_{j+1}}(X_{j,k}),\vspace{-1ex} 
\]
\State $i_j = \arg\min\{k\mid \correction{X_{j,k}} \in A_{j+1}\}$
\State Define initial state for next level $X_{j+1,0} = X_{j,i_j}$
\EndFor
\State \textbf{Result:} $\widehat P = \prod_{j=1}^{K} \widehat P_j$
\end{algorithmic}
\end{algorithm}

Assuming that the resulting Markov chains $(X_{j,i})_{i\ge0}$ are $\pi_j$--irreducible and Harris recurrent under a small set condition, the POP algorithm converges in the sense that for every $j$ there exists a constant $C_j>0$ such that
\[
\mathbb E\left[ |\widehat P_j - \mathbb P(X\in A_{j+1}\mid X\in A_j)|^2\right] \le \frac{C_j}{N_j}.
\]

\begin{algorithm}[t]
\caption{Parallel One-Path algorithm based on Gaussian transformation.}
\label{alg:POP2}
\begin{algorithmic}[1]
\State Given a seed $\Theta_{0,0}\sim \Theta$ and a correlation parameter $\eta\in[0,1]$
\For{$j=0,\dots,K-1$} 
\For{ $i=0,\dots,N_{j+1}-1$}
\State Generate $\correction{W_{j,i}}\sim \Theta$,
\State Shake and accept/reject $\Theta_{j,i+1} = M_j^{S_\eta}(\Theta_{j,i},\correction{W_{j,i}})$
\EndFor
\State Estimate the probability of subsets $F_{j+1}$ by\vspace{-1ex} 
\update{\[
\qquad\qquad\qquad 
\widehat P_{j+1} = \frac1{N_{j+1}}\sum\limits_{k=0}^{N_{j+1}-1} \mathbbm{1}_{A_{j+1}}(\Theta_{j,k}),\vspace{-1ex}
\]}
\State \update{$i_j = \arg\min\{k\mid \Theta_{j,k} \in A_{j+1}\}$}
\State Define initial state for next level $\Theta_{j+1,0} = \Theta_{j,i_j}$
\EndFor
\State \textbf{Result:} $\widehat P = \prod_{j=1}^{K} \widehat P_j$
\end{algorithmic}
\end{algorithm}

In \cite{ADGL2017} the authors include a model for $X$ via Gaussian transformations which has also been considered in \cite{PaWoKiZwSt14}.  We consider 
$X$ modelled through a 
\update{measurable} transformation $T$ as
\[ X(\omega) = T(\Theta(\omega)), \ \omega\in\Omega,\ T:\mathbb R^d \to \mathbb X\ \text{measurable},\]
where $\Theta\sim\mathcal N(0,I_{\mathbb R^d})$.
The underlying rare event can then be formulated as 
\[ 
\update{A = \{\theta\in\mathbb R^d \mid \varphi(\theta,\bar y)\le 0\} \subset \mathcal B(\mathbb R^d),\quad \varphi:\mathbb R^d\times \mathbb R\cup \{\infty\}\to \mathbb R}
\]
\update{where $\varphi$ is non--increasing in the second component in the sense that $\varphi(\theta,y)\ge \varphi(\theta,y')$, for any $\theta\in\mathbb R^d$ and $y\ge y'$, and the convention $\varphi(\theta,\infty):=-\infty$ is assumed.  Further, we assume that $\varphi$ is measurable in the first component. The nested subsets are built through a sequence of level parameters $\bar y:= y_K<\dots<y_k<\dots<y_0\le\infty$ by
\begin{equation}\label{eq:subsets_GT}
A_j = \{\theta\in\mathbb R^d\mid \varphi(\theta,y_j)\le 0\} \in \mathcal B(\mathbb R^d),
\end{equation}
such that 
\[
A:= A_K\subset \dots \subset A_j\subset\dots \subset A_0 := \mathbb R^d.
\]
In fact, the special case $\varphi(\theta,y):= T(\theta)-y$ for a measurable model functional $T:\mathbb R^d\to\mathbb R$
connects the present approach to the setting considered in the previous sections. The sequence of subsets $F_j$ considered above can then be written as $F_j=\{\update{\Theta}\in A_j\} \in\Sigma$.}

Consider the shaking transformation 
\begin{equation}\label{eq:pcn_shaking}
    S_\eta(\update{\theta},\correction{w}) = \sqrt{1-\eta^2}\update{\theta}+\eta \correction{w},
\end{equation} 
which obviously satisfies \eqref{eq:shakingtransf}, and let \correction{$W$} be an independent copy of $\Theta$.  The shaking transformation (with rejection) is now defined as 
\update{\begin{equation}\label{eq:rejection_operator}
M_j^{S_\eta}:\begin{cases}\mathbb R^d\times\mathbb R^d\to \mathbb R^d,\\ (\update{\theta},\correction{w})\mapsto S_\eta(\update{\theta},\correction{w})\mathbbm{1}_{S_\eta(\update{\theta},\correction{w})\in A_j} + x \mathbbm{1}_{S_\eta(\update{\theta},\correction{w})\notin A_j} \end{cases}.
\end{equation}}
We note that this is in fact equivalent to using the pCN approach, which is in detailed balance with the prior. This has the advantage that the acceptance rate does not
decrease with the dimension, which is the case, e.g., for traditional random walk proposals. The POP algorithm is then of very similar form as one of the algorithms proposed in \cite{PaWoKiZwSt14} to generate samples from the conditional pdf $\pi_{j-1}$. For completeness, details are provided in Algorithm \ref{alg:POP2}. 
Under certain assumptions 
geometric convergence of the resulting estimator can be verified. 

\begin{theorem}[{\cite[Thm.\ 8]{ADGL2017}}]\label{thm:ergodicity}
Let $j\in\{0,\dots,K-1\}$ be fixed and consider the Markov chain $(\Theta_{j,i})_{i\ge0}$ resulting from 
Algorithm~\ref{alg:POP2}. Assume that 
\begin{equation}\label{eq:suff_small_set}
\update{\sup_{\theta\in A_j} \mathbb P(S(\theta,W)\notin A_j)}=\delta_1<1
\end{equation}
and let $c\ge2$ and $q\ge 2$. Define $V(\theta) = \exp(c \sum_{k=1}^d\theta_k)$ and assume that the 
initial condition $\Theta_{j,0}$ is independent of the future evolution of 
the Markov chain and such that $\mathbb E \left[V(\Theta_{j,0})\right] <+\infty$. Then, there exists $C>0$ and a geometric rate $r\in(0,1)$ such that for any measurable function $g:\mathbb R^d\to \mathbb R$ with 
\[\sup_{\theta\in\mathbb R^d} \frac{|g(\theta)|}{V^{1/q}(\theta)}<+\infty\] it holds true that
\begin{align*}
\update{\mathbb E\left[\left|\frac1N \sum_{i=1}^N g(\Theta_{j,i}) - \mathbb E[g(\Theta) \;\big| \; \Theta\in A_j]\right|^q\right]} &\le C N_j^{-q/2},\\[0.5ex]
\text{and} \qquad \update{\left|\mathbb E[g(\Theta_{j,N}] -\mathbb E[g(\Theta)\;\big|\; \Theta\in A_j]\right|} &\le C r^n.
\end{align*}
\end{theorem}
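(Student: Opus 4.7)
\medskip
\noindent\textbf{Proof plan.}
The plan is to recognise the statement as an instance of the classical Meyn--Tweedie framework for $V$-uniformly geometrically ergodic Markov chains and to check its three standing hypotheses: (i) identification of the stationary distribution, (ii) a minorisation/small-set condition, and (iii) a Foster--Lyapunov geometric drift condition with Lyapunov function $V$. Once these are in place, the exponential bias bound follows from $V$-geometric ergodicity in the weighted total-variation norm, and the $L^q$-moment bound follows from a martingale decomposition for the centred sum together with a Rosenthal/Burkholder inequality.

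First, I would identify the invariant measure. The unconditional shaking step $S_\eta(\theta,Y)=\sqrt{1-\eta^2}\,\theta+\eta Y$ with $Y\sim\mathcal{N}(0,I_d)$ preserves $\mathcal{N}(0,I_d)$ and is $\mathcal{N}(0,I_d)$-reversible (the pCN remark in the paper). Composing with the rejection filter $M_j^{S_\eta}$, which only accepts proposals in $\{T(\cdot)\in A_j\}$, produces a kernel that is reversible with respect to $\mathcal{N}(0,I_d)$ restricted and renormalised to $\{\theta:T(\theta)\in A_j\}$; call this measure $\pi_j$. Then $\pi_j(g)=\mathbb{E}[g(\Theta)\mid T(\Theta)\in A_j]$, which is exactly the target. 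Second, the hypothesis \eqref{eq:suff_small_set} gives a uniform lower bound $1-\delta_1>0$ on the acceptance probability; combined with the strict positivity and local uniform lower bound of the Gaussian proposal density, it implies that every sublevel set $\{V\le R\}\cap\{T\in A_j\}$ is a small set with a one-step minorisation constant proportional to $1-\delta_1$. Third, for $V(\theta)=\exp(c\sum_k\theta_k)$ a direct computation yields
\begin{equation*}
\mathbb{E}\bigl[V(S_\eta(\theta,Y))\bigr]=V(\theta)^{\sqrt{1-\eta^2}}\,\exp\!\bigl(c^2\eta^2 d/2\bigr),
\end{equation*}
and since rejection only replaces the proposal value by $\theta$ itself, the transition kernel satisfies $PV(\theta)\le\max\{V(\theta),V(\theta)^{\sqrt{1-\eta^2}}e^{c^2\eta^2 d/2}\}$. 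Because $\sqrt{1-\eta^2}<1$, we obtain the drift $PV\le\lambda V+b\,\mathbbm{1}_C$ with $\lambda\in(0,1)$, $b<\infty$, and $C=\{V\le R\}$ for $R$ large.

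With (i)--(iii) established, the Meyn--Tweedie theorem (e.g.\ \cite[Thm.\ 15.0.1]{} in their monograph) yields the $V$-geometric ergodicity
\begin{equation*}
\sup_{|h|\le V}\bigl|P^n(\theta,h)-\pi_j(h)\bigr|\le C\,V(\theta)\,r^n,
\end{equation*}
for some $r\in(0,1)$. Taking $h=g$ with $|g|\le V^{1/q}\le V$ and using $\mathbb{E}[V(\Theta_{j,0})]<\infty$ gives the second (exponential) bound in the theorem. For the $L^q$-moment bound, the Poisson equation $\hat g-P\hat g=g-\pi_j(g)$ admits a solution $\hat g$ with $|\hat g|\lesssim V^{1/q}$ under $V$-geometric ergodicity, and the centred sum decomposes into a martingale $\sum_{i=1}^N(\hat g(\Theta_{j,i})-P\hat g(\Theta_{j,i-1}))$ plus boundary terms of order $V^{1/q}$. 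Rosenthal's/Burkholder's inequality applied to the martingale, combined with the drift-induced moment bound $\sup_n\mathbb{E}[V(\Theta_{j,n})^{1/q}]<\infty$, produces the $N_j^{-q/2}$ rate, as in the standard theory of Markov chain $L^q$-laws of large numbers.

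The main obstacle is the interplay between the Lyapunov function and the class of admissible test functions: one has to verify that the conditions $c\ge 2$ and $q\ge 2$ are precisely what is needed so that $V^{1/q}$ remains bounded by $V$ in an $L^q(\pi_j)$-solvable manner and so that the Poisson-equation solution inherits the required integrability. In addition, showing that the rejection step does not destroy the drift (rather than merely the minorisation) requires a careful splitting by acceptance event, but once this is done the remaining steps are routine applications of Meyn--Tweedie theory.
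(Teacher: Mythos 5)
First, note that the paper does not prove this statement at all: it is quoted, with attribution, as Theorem~8 of \cite{ADGL2017} and used as a black box to justify ergodicity of the POP chains, so there is no in-paper argument to compare yours against. What can be assessed is whether your sketch would stand as a self-contained proof. Your overall strategy --- identify the conditioned Gaussian as the reversible invariant measure of $M_j^{S_\eta}$, verify a minorisation and a Foster--Lyapunov drift for $V$, invoke $V$-geometric ergodicity for the bias bound, and use the Poisson equation plus a Rosenthal/Burkholder inequality for the $L^q$ bound --- is the standard Meyn--Tweedie route and is essentially the framework underlying the cited result; your computation $\mathbb{E}[V(S_\eta(\theta,Y))]=V(\theta)^{\sqrt{1-\eta^2}}e^{c^2\eta^2 d/2}$ is correct.

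However, two steps do not go through as written. (i) The drift bound $PV(\theta)\le\max\{V(\theta),V(\theta)^{\sqrt{1-\eta^2}}e^{c^2\eta^2 d/2}\}$ is not valid for the accept/reject kernel: one has $PV(\theta)=\mathbb{E}\bigl[V(S_\eta(\theta,Y))\mathbbm{1}_{T(S_\eta(\theta,Y))\in A_j}\bigr]+V(\theta)\,\mathbb{P}\bigl(T(S_\eta(\theta,Y))\notin A_j\bigr)$, a sum rather than a maximum, and the second term carries the rejection probability as its coefficient. Closing the drift requires invoking \eqref{eq:suff_small_set} at this point --- the rejection probability is at most $\delta_1<1$ uniformly on $\{T(\theta)\in A_j\}$ --- yielding $PV\le V^{\sqrt{1-\eta^2}}e^{c^2\eta^2 d/2}+\delta_1 V\le(\delta_1+\varepsilon)V+b$ outside a sublevel set; you relegate this to a closing remark, but it is the only place where \eqref{eq:suff_small_set} enters the drift. (ii) More seriously, with $V(\theta)=\exp(c\sum_k\theta_k)$ the sublevel set $\{V\le R\}$ is the unbounded half-space $\{\sum_k\theta_k\le c^{-1}\log R\}$, and your claimed one-step minorisation ``proportional to $1-\delta_1$'' over $\{V\le R\}\cap\{T\in A_j\}$ cannot hold: the pCN proposal density $\mathcal N(\sqrt{1-\eta^2}\,\theta,\eta^2 I_d)$ evaluated on any fixed reference set tends to zero as $|\theta|\to\infty$ within that half-space, so no uniform minorisation constant exists over it. Establishing that this set is nonetheless petite (or reorganising the argument around a Lyapunov function with compact sublevel sets that is still compatible with the test-function class $|g|\lesssim V^{1/q}$) is precisely the non-routine part of the cited proof, and it is missing from your sketch.
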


In this setting, the estimator $\widehat P_j$ of $\Pr(F_j|F_{j-1})$ is then given by
\begin{equation}\label{eq:estim_comnd}
	\update{\widehat P_j = \frac{1}{N}\sum_{i=1}^N\mathbbm{1}_{\{\Theta_{j-1,i}\in A_j\}} = \frac{1}{N}\sum_{i=1}^N\mathbbm{1}_{(-\infty,0]}(\varphi(\Theta_{j-1,i}\,,y_j))}.
\end{equation}
\update{Note that by using a measurable transformation $T_\ell$ such that $G_\ell(\omega) = T_\ell(\Theta(\omega))$, the limit state function can also be defined pointwise in $y_\ell$, i.e., $\varphi(\theta,y_\ell):= T_\ell(\theta)-y_\ell$. In that case, the subset property follows from Lemma~\ref{lem:subset} and Theorem~\ref{thm:ergodicity} can be applied to the adaptive multilevel subset simulation algorithm as well.}

\correction{A brief discussion of the assumptions is in order. To ensure condition \eqref{eq:suff_small_set} the acceptance rate of the algorithm needs to be uniformly bounded from below. If
\begin{equation*}
\update{\sup_{\theta\in A_j} \mathbb P(S(\theta,W)\notin A_j)}=1,
\end{equation*} 
the Markov chain may get stuck in some points. The condition is problem dependent and in general difficult to verify. However, this is a common problem with MCMC algorithms.}

Since the sequences of samples $\{\Theta_{j-1,i}\}_{i=1}^N$ are dependent, we use convergence diagnostics to estimate the autocorrelation
factor in the sequence. A common measure of the
dependence in the sequence is the autocorrelation factor. Following \cite{AB01}, we use multiple chains
to compute an estimate of the autocorrelation factor $\phi_{j}$. The total number of samples~$N$ to achieve a certain accuracy of the estimator in \eqref{eq:estim_comnd} is \correction{again determined by the rRMSE in \eqref{eq:def_cov}:}
\begin{equation*}
\delta\big(\widehat P_j\big) =
\frac1{\Pr(F_j|F_{j-1})} \sqrt{\correction{\mathbb E}\left[\big(\widehat P_j - \Pr(F_j|F_{j-1})\big)^2\right]}\, .
\end{equation*}
Expanding the square of the rRMSE, 
we can estimate 
\begin{equation}\label{eq:cov_subset}
\delta(\widehat P_j)^2 = \correction{\frac{1 - \mathbb E\left[\widehat P_j\right]}{N \Pr(F_j|F_{j-1})} = \frac{1-
    \Pr(F_j|F_{j-1})}{N\Pr(F_j|F_{j-1})}(1+\phi_j),}
\end{equation}
with autocorrelation factor $\phi_j=0$ 
for i.i.d.~samples and $\phi_j>0$ for dependent samples. \correction{We note that for each $j$ the expectation in \eqref{eq:cov_subset} is with respect to 
the joint distribution of the $\Theta_{j-1,i},\ i=1,\dots,N$.} 

\medskip

\section{Complexity Analysis} 
\label{sec:complexity}

We will now bound the complexity of the new estimators.

\subsection{Complexity of subset simulation with selective refinement}
\label{sec:complexity_ss_select}

{\correction{In analogy to~\eqref{expand_del}, the square $\delta\big(\widehat P^{\text{SuS}} \big)^2$ of the rRMSE for the subset simulation estimator can be expanded into the sum of a bias term -- identical to the one in \eqref{expand_del} for standard MC -- and the relative variance $\mathbb V [\widehat
  P^{\mathrm{SuS}}]/P^2_L$\,.}} The bias term can again be made less than
$\mathrm{TOL}^2/2$ by choosing $L$ as before in \eqref{def:L}.
Moreover, applying results derived in \cite{AB01} it turns out that
\begin{equation}\label{eq:var_bound_sus}
\mathbb E \bigg[\frac{(\widehat  P^{\mathrm{SuS}}-P_L)^2}{P_L^2} \bigg]\leq \frac{\text{TOL}^2}{2}
\quad \text{if} \quad \delta(\widehat P_j)^2 = \correction{\mathbb E} \bigg[\frac{(\widehat P_j-P_j)^2}{P_j^2}  \bigg] \propto K^{-s} \,\text{TOL}^2\,,
\end{equation}
where $s=1$ if the estimators $\widehat
P_j$, $j=1,\ldots,K$, are uncorrelated and $s=2$ if they are fully correlated. To simplify notation, the biased densities defined by $F_{j-1}^L$ instead of $F_{j-1}$ in \eqref{eq:cond_dens} are again denoted by 
$\pi_{j-1}$ suppressing the dependence on $L$.  

Let us now estimate the cost for the particular case where $\widehat
P_j $ is chosen to be the MCMC estimator for $\Pr\big(F_j\mid
  F_{j-1}\big)$ described in
Section~\ref{sec:MCMC} with $N_{j}$ samples.  As it has been shown in \cite{AB01} the \correction{rRMSE} can be computed as
\begin{equation}\label{eq:error_level1}
\delta(\widehat P_j)^2 = \frac{1-\Pr\left(F_j\mid F_{j-1}\right)}{N_j \Pr\left(F_j\mid F_{j-1}\right)}(1+\phi_j)
\end{equation}
where $\phi_j$ is the autocorrelation factor of the Markov
chain produced by Algorithm~\ref{alg:POP2}.  We note that \eqref{eq:error_level1} crucially depends on the fact that the underlying Markov chain generated through the MCMC algorithm is ergodic. The ergodicity of the Markov chain can be obtained from Theorem~\ref{thm:ergodicity}.  

In the following complexity analysis, we assume that \eqref{eq:error_level1} is satisfied. Moreover, we assume an adaptive selection of the
failure sets has been applied,  e.g.~as described in \cite{AB01}, where the values of $y_j$
are chosen in the course of the algorithm, such that $\widehat P_j \approx p_0$, for all
$j=1,\ldots,K$. It was shown in \cite{ZuBe12} that the best
performance is achieved for values of the constant $p_0 \in
[0.1,0.3]$.  For simplicity, we assume (without loss of generality) that $\Pr(F_j\mid F_{j-1})=p_0$ for all $j=1,\ldots,K$. \update{However, since the choice of $p_0$ is somewhat arbitrary, it could also be chosen level-dependent so that the condition $y_{j-1}-y_j\ge 2\gamma^L$ is satisfied on all subsets.}

\begin{theorem}\label{thm:complexity_sus} 
Suppose that 
\begin{enumerate}
\item Assumption~\ref{ass:error_estimate1} is satisfied for some $q \ge 0$ and $\gamma \in (0,1)$,
\item the subsets are chosen such that 
$\Pr(F_j\mid F_{j-1}) = p_0 := P_L^{1/K}$, for all $j=1,\ldots,K-1$, and
\item there exists a constant $\phi < \infty$ such that
$\phi_j\le \phi$ {in \eqref{eq:error_level1}}, for all $j=1,\ldots,K$.
\end{enumerate}
Then, for any $\text{TOL}>0$ the maximum discretisation level $L$ and the numbers of samples $\{N_j\}$ can be chosen such that
\[
\mathbb E \bigg[\frac{|\widehat P^{\mathrm{SuS}}- P|^2}{P^2}\bigg]\le \text{TOL}^2
\]
with a cost that is bounded by
\[
\mathbb{E} \left[\costb{\widehat P^{\mathrm{SuS}}} \right]\le c_4\frac{P_L^2}{P^2}  \text{TOL}^{-(2+q)} \Big(\log\big(P_L^{-1}\big)\Big)^{s+1}
\]
for some constant $c_4>0$. Here, $s=1$ if the estimators $\widehat P_j$, $j=1,\ldots,K$, are uncorrelated and $s=2$ if they are fully correlated.

If in addition Assumption~\ref{ass:selective_refinement} \update{holds and the conditions $y_{j-1}-y_j\ge 2\gamma^L$ are satisfied for all $j=1,\ldots K$, then} the cost can be bounded by 
\[
\mathbb{E} \left[\costb{\widehat P^{\mathrm{SuS}}}\right] \le c_4\frac{P_L^2}{P^2}  \text{TOL}^{-\max(2,1+q)} \Big(\log\big(P_L^{-1}\big)\Big)^{s+1}.
\]
\end{theorem}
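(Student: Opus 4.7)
The plan is to follow a bias/variance decomposition of the relative mean square error of $\widehat P^{\mathrm{SuS}}$, exactly as done for plain Monte Carlo in \eqref{expand_del}, and then to tally up the resulting computational cost. Assuming the POP algorithm is asymptotically unbiased for $P_L$ on each subset (cf.\ Theorem~\ref{thm:ergodicity}), one obtains
\[
\mathbb E\Big[\tfrac{(\widehat P^{\mathrm{SuS}} - P)^2}{P^2}\Big] \;=\; \Big(\tfrac{P_L - P}{P}\Big)^2 \;+\; \tfrac{P_L^2}{P^2}\,\mathbb E\Big[\tfrac{(\widehat P^{\mathrm{SuS}} - P_L)^2}{P_L^2}\Big],
\]
and forcing each summand below $\mathrm{TOL}^2/2$ will yield the desired tolerance. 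The $P_L^2/P^2$ prefactor appearing in this rescaling is the source of the analogous prefactor in the final cost bound.

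For the bias, Assumption~\ref{ass:error_estimate1} combined with the Lipschitz continuity of the cdf of $G$ implies $|P_L - P| \le c_{\mathrm{Lip}}\gamma^L$, so the choice $L \propto \log(\mathrm{TOL}^{-1})$ from \eqref{def:L} suffices (identical argument to the MC case). Hypothesis~(ii) then fixes the number of levels via $K = \log(P_L^{-1})/\log(p_0^{-1})$; with $p_0$ in a fixed range (cf.\ $[0.1,0.3]$ from \cite{ZuBe12}) this yields $K \propto \log(P_L^{-1})$.

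For the variance, we invoke the aggregation inequality~\eqref{eq:var_bound_sus}: it suffices to enforce the per-level c.o.v.\ condition $\widehat\delta(\widehat P_j)^2 \propto K^{-s}\mathrm{TOL}^2$ for every $j$, where $s=1$ in the uncorrelated case and $s=2$ in the fully correlated case. The explicit identity~\eqref{eq:error_level1} (valid thanks to the ergodicity provided by Theorem~\ref{thm:ergodicity}), together with hypothesis~(iii) ($\phi_j \le \phi$) and $\Pr(F_j \mid F_{j-1}) = p_0$, translates this into the uniform sample-size prescription
\[
N_j \;\propto\; \tfrac{(1-p_0)(1+\phi)}{p_0}\, K^s\, \mathrm{TOL}^{-2}.
\]

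Finally we sum the work. Under Assumption~\ref{ass:error_estimate1} we have $\mathbb E[\mathcal C[G_L]] \le c_0\gamma^{-Lq} \propto \mathrm{TOL}^{-q}$, so
\[
\mathbb E\big[\mathcal C[\widehat P^{\mathrm{SuS}}]\big] \;=\; \sum_{j=1}^K N_j\,\mathbb E[\mathcal C[G_L]] \;\propto\; K^{s+1}\,\mathrm{TOL}^{-(2+q)},
\]
which together with the $P_L^2/P^2$ prefactor from the variance rescaling yields the first bound. Under the additional Assumption~\ref{ass:selective_refinement}, the per-sample cost drops to $c_0(1+\gamma^{(1-q)L}) \propto \mathrm{TOL}^{-\max(0,q-1)}$ by the estimate quoted just after that assumption, producing the improved exponent $\max(2,1+q)$. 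The main technical hurdle is to justify~\eqref{eq:var_bound_sus} rigorously in both correlation regimes (so that $s=1$ versus $s=2$ is handled correctly) and to ensure that the rescaling between $P$ and $P_L$ contributes nothing worse than the benign prefactor $P_L^2/P^2$, which is close to one by the choice of $L$.
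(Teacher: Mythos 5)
Your proposal follows essentially the same route as the paper: the identical bias/variance split with the $P_L^2/P^2$ rescaling, $L\propto\log(\mathrm{TOL}^{-1})$ for the bias, $K\propto\log(P_L^{-1})$ from $p_0=P_L^{1/K}$, the per-level c.o.v.\ prescription $\delta(\widehat P_j)^2\propto K^{-s}\mathrm{TOL}^2$ via \eqref{eq:error_level1}, and the same cost tally with the selective-refinement improvement $\mathrm{TOL}^{-\max(2,1+q)}$. The only substantive difference is that where you assume asymptotic unbiasedness for $P_L$ and defer the justification of \eqref{eq:var_bound_sus}, the paper explicitly controls both the finite-sample bias $\mathbb E[\widehat P^{\mathrm{SuS}}-P_L]$ and the relative mean square error through the bounds on the quantities $A$ and $B$ quoted from \cite{AB01}, which is precisely the aggregation step you flag as the remaining hurdle.
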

\begin{proof}
We first split 
\begin{equation}\label{eq:sus_proof1}
\mathbb E\bigg[\frac{|\widehat P^{\mathrm{SuS}}- P|^2}{P^2}\bigg] = \bigg(\frac{\mathbb E[\widehat P^{\mathrm{SuS}}-P]}{P}\bigg)^2 + \frac{P_L^2}{P^2}\mathbb E\bigg[\frac{(\widehat P^{\mathrm{SuS}}-P_L)^2}{P_L^2}\bigg],
\end{equation}
where the first term can be bounded by
\begin{equation}\label{eq:sus_proof2}
\bigg(\frac{\mathbb E[\widehat P^{\mathrm{SuS}}-P]}{P}\bigg)^2\le 2 \bigg(\frac{P_L-P}{P}\bigg)^2 + 2\frac{P_L^2}{P^2}\bigg(\frac{\mathbb E[\widehat P^{\mathrm{SuS}}-P_L]}{P_L}\bigg)^2.
\end{equation}
To bound the bias error we choose $L\propto\log(\text{TOL}^{-1})$
as in \eqref{def:L}. Due to the assumptions of the theorem we can choose $N_j = N$ uniformly across all subsets. 
The expressions
\[A:=\bigg(\frac{\mathbb E[\widehat P^{\mathrm{SuS}}-P_L]}{P_L}\bigg)^2 \quad \text{and}\quad B:=\mathbb E\bigg[\frac{(\widehat P^{\mathrm{SuS}}-P_L)^2}{P_L^2}\bigg]\]
have been analysed in \cite{AB01}, where it has been shown that
\[A\le \sum_{i,j=1,\ j>i}^K \delta(\widehat P_i)\delta(\widehat P_j) + o\left(\frac1{N}\right)\quad\text{and}\quad B\le \sum_{i,j=1}^K \delta(\widehat P_i)\delta(\widehat P_j) + o\left(\frac1{N}\right)\]
if the estimators $\{\widehat P_j\}$ are correlated, whereas 
\[A\le \sum_{j=1}^K \delta(\widehat P_j)^2\quad\text{and}\quad B\le \sum_{j=1}^K \delta(\widehat P_j)^2\]
if the estimators $\{\widehat P_j\}$ are uncorrelated. 
If $N$ is now chosen such that
\begin{equation}\label{eq:bound_cov}
\delta(\widehat P_j)^2\le c_3K^{-s}\frac{\text{TOL}^2}{5}\frac{P^2}{P_L^2}
\end{equation}
it follows that
\begin{equation}\label{eq:sus_proof3}
A\le c_3\frac{\text{TOL}^2}{5}\frac{P^2}{P_L^2}\quad \text{and}\quad  B\le c_3\frac{\text{TOL}^2}{5}\frac{P^2}{P_L^2}\,.
\end{equation}
Taking into account the choice of $L$ in \eqref{def:L} and combining \eqref{eq:sus_proof1}, \eqref{eq:sus_proof2} and \eqref{eq:sus_proof3}, finally leads to 
\[
\mathbb E\bigg[\frac{|\widehat P^{\mathrm{SuS}}- P|^2}{P^2}\bigg] \le \text{TOL}^2\,.
\]
Now, using the third assumption of the theorem, i.e., that $\phi_j\leq\phi$, a sufficient
condition on the 
number of samples to ensure that the bound in \eqref{eq:bound_cov}
holds is
\begin{equation*}
N 
\propto \frac{P_L^2}{P^2} K^{s} \text{TOL}^{-2} \,,
\end{equation*}
where the proportionality constant depends on $p_0$, $\phi$ and $c_3$,
but is independent of $\text{TOL}$ and $K$. 
Then, recalling that 
$P_L = p_0^K$ and thus $K  \propto \log\big(P_L^{-1}\big)$ it follows that
\[
N \propto \Big(\log\big(P_L^{-1}\big)\Big)^{s+1} \frac{P_L^2}{P^2} \text{TOL}^{-2}\,.
\]
Hence, combining this with the assumed bound on the cost per sample in Assumption~\ref{ass:error_estimate1} and using the fact that $L\propto\log(\text{TOL}^{-1})$ implies $\gamma^{-qL} \propto \text{TOL}^{q}$, the total cost to compute the subset
simulation estimator can be bounded by 
\begin{equation*}
  \mathbb E\left[\costb{\widehat P^{\mathrm{SuS}}}\right] \le KN c_0 \gamma^{-Lq} \le c_4\frac{P_L^2}{P^2}  \text{TOL}^{-(2+q)} \Big(\log\big(P_L^{-1}\big)\Big)^{s+1} \,, 
\end{equation*}
for some constant $c_4>0$. 
Under the selective refinement Assumption~\ref{ass:selective_refinement} and for $q>1$, the computational cost can be bounded by 
\[ 
  \mathbb E\left[\costb{\widehat P^{\mathrm{SuS}}}\right]  \le KN c_0 (1+\gamma^{(1-q)L}) \le c_4\frac{P_L^2}{P^2}  \text{TOL}^{-\max(2,1+q)} \Big(\log\big(P_L^{-1}\big)\Big)^{s+1} \,.
\]
\end{proof}
\begin{remark}
    \update{We note that following Lemma~\ref{lem:subset0}, the assumption $y_{j-1}-y_j\ge 2\gamma^L$, for $j=1,\dots,K,$ is needed to ensure the subset property in the case of
    Assumption~\ref{ass:selective_refinement} (i.e.~for subsets defined through $F_j = \{\omega\in\Omega:\ G_L^{y_j}(\omega)\le y_j\}$). Although this condition may fail in practical scenarios, it is easy to check on the fly and then to apply the selective refinement strategy only on subsets where it is satisfied. The presented complexity analysis in Theorem~\ref{thm:complexity_sus} can then be viewed as the best-case scenario. In many cases it will apply. In the worst case, the complexity is the same as the cost of classical subset simulation.}
\end{remark}

\subsection{Complexity of adaptive multilevel subset simulation}

We now turn our attention to the complexity of the adaptive multilevel subset estimator defined in \eqref{eq:ML_estimator}.
We will make use of the following convergence property in the analysis below, assuming that we can control the ratio of the subset probabilities sufficiently well.
\begin{lemma}\label{lem:level_bound}
Suppose Assumption~\ref{ass:error_estimate1} is satisfied and $y_\ell$ is defined as in Lemma~\ref{lem:subset}. 
Furthermore, we assume that\vspace{-2ex}
\begin{equation}\label{eq:subset_ratio}
\qquad\qquad\frac{\Pr(G\le y)}{\Pr(G\le x)} \le 1+\tilde c|y-x|\log(P^{-1}),
\end{equation}
for $y>x>0$ and $\tilde c>0$ independent of $P$. 
Then the difference between the intermediate probabilities can be bounded in the following way
\begin{equation*}
{\frac{1-\Pr(F^{\text{ML}}_\ell\mid F^{\text{ML}}_{\ell-1} )}{\Pr(F^{\text{ML}}_\ell\mid F^{\text{ML}}_{\ell-1})}} = \frac{1-\Pr(G_\ell\le y_l\mid G_{\ell-1} \le y_{\ell-1})}{\Pr(G_\ell\le y_l\mid G_{\ell-1} \le y_{\ell-1})} \le c_5 \gamma^{\ell}\log(P^{-1}).
\end{equation*}
\end{lemma}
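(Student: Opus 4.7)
The plan is to begin by exploiting the subset property $F^{\text{ML}}_\ell \subset F^{\text{ML}}_{\ell-1}$, guaranteed by Lemma \ref{lem:subset}, to rewrite the conditional probability as a plain ratio:
\begin{equation*}
\Pr(F^{\text{ML}}_\ell \mid F^{\text{ML}}_{\ell-1}) = \frac{\Pr(G_\ell \le y_\ell)}{\Pr(G_{\ell-1} \le y_{\ell-1})},
\end{equation*}
so that the quantity we need to bound becomes
\begin{equation*}
\frac{1-\Pr(F^{\text{ML}}_\ell\mid F^{\text{ML}}_{\ell-1})}{\Pr(F^{\text{ML}}_\ell\mid F^{\text{ML}}_{\ell-1})} \;=\; \frac{\Pr(G_{\ell-1} \le y_{\ell-1})}{\Pr(G_\ell \le y_\ell)} - 1.
\end{equation*}

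Next I would transfer everything from the approximations $G_k$ back to the exact limit state function $G$, using the pointwise bound $|G-G_k|\le \gamma^k$ from Assumption \ref{ass:error_estimate1}. This yields the sandwich
\begin{equation*}
\Pr(G \le y-\gamma^k) \;\le\; \Pr(G_k \le y) \;\le\; \Pr(G \le y+\gamma^k),
\end{equation*}
which, applied to the numerator with $k=\ell-1$ and to the denominator with $k=\ell$, gives
\begin{equation*}
\frac{\Pr(G_{\ell-1} \le y_{\ell-1})}{\Pr(G_\ell \le y_\ell)} \;\le\; \frac{\Pr(G \le y_{\ell-1}+\gamma^{\ell-1})}{\Pr(G \le y_\ell - \gamma^\ell)}.
\end{equation*}

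Then the choice of thresholds in Lemma \ref{lem:subset}, $y_{\ell-1}-y_\ell = \gamma^{\ell-1}+\gamma^\ell$, controls the gap between the two arguments: the difference is $2\gamma^{\ell-1}+2\gamma^\ell = 2(1+\gamma^{-1})\gamma^\ell$. Plugging this into hypothesis \eqref{eq:subset_ratio} gives
\begin{equation*}
\frac{\Pr(G\le y_{\ell-1}+\gamma^{\ell-1})}{\Pr(G\le y_\ell-\gamma^\ell)} \;\le\; 1 + 2\tilde c (1+\gamma^{-1}) \gamma^\ell \log(P^{-1}),
\end{equation*}
and subtracting $1$ produces the claimed bound with $c_5 = 2\tilde c(1+\gamma^{-1})$.

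The only real subtlety, and what I would expect to be the main obstacle, is the positivity constraint $x_1>x_2>0$ in the assumed ratio bound \eqref{eq:subset_ratio}: at the finest level $\ell=K$ we have $y_K=0$, so $y_K-\gamma^K<0$, and for $\ell$ close to $K$ the shift $\gamma^\ell$ can dominate $y_\ell$. This forces one either to restrict the statement to levels where $y_\ell>\gamma^\ell$, or to extend \eqref{eq:subset_ratio} to a neighborhood of the origin using the Lipschitz continuity of the cdf of $G$ from Assumption \ref{ass:error_estimate1} (which controls absolute rather than relative differences and can be combined with a lower bound on $\Pr(G\le y_\ell-\gamma^\ell)$ coming from the non-vanishing subset probabilities). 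All other steps are algebraic manipulations of the geometric threshold pattern.
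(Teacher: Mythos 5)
Your proof follows essentially the same route as the paper's: rewrite the conditional probability as the ratio $\Pr(G_{\ell-1}\le y_{\ell-1})/\Pr(G_\ell\le y_\ell)-1$ via the subset property, sandwich the approximate probabilities between $\Pr(G\le y_\ell-\gamma^\ell)$ and $\Pr(G\le y_{\ell-1}+\gamma^{\ell-1})$ using the pointwise error bound, and apply hypothesis \eqref{eq:subset_ratio} to the resulting gap of $2\gamma^{\ell-1}+2\gamma^\ell$, yielding the same constant $c_5=2\tilde c(1+\gamma^{-1})$. The subtlety you flag about the constraint $x_1>x_2>0$ failing near $y_K=0$ is genuine and is silently glossed over in the paper's own proof, so your remark identifies a real loose end rather than a gap in your argument.
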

\begin{proof}
Applying the subset property we obtain
\begin{equation*}
\frac{1-\Pr(G_\ell\le y_l \!\mid \!G_{\ell-1} \le y_{\ell-1})}{\Pr(G_\ell\le y_\ell \!\mid\! G_{\ell-1} \le y_{\ell-1})} = \frac{\Pr(G_{\ell-1}\le y_{\ell-1})}{\Pr(G_\ell\le y_\ell)} - 1 = \frac{\Pr(G_{\ell-1}\le y_{\ell-1})-\Pr(G_\ell\le y_\ell)}{\Pr(G_\ell\le y_\ell)}.
\end{equation*}
First, note that by Assumption~\ref{ass:error_estimate1} we have 
$G_\ell \le G+\gamma^\ell\ \text{and}\ G-\gamma^{\ell-1} \le G_{\ell-1}$,
and thus 
\begin{align*}
\Pr(G_\ell\le y_\ell) &\ge \Pr(G+\gamma^\ell\le y_\ell)= \Pr(G\le y_\ell-\gamma^\ell),\\
\Pr(G_{\ell-1}\le y_{\ell-1}) &\le \Pr (G-\gamma^{\ell-1} \le y_{\ell-1}) = \Pr(G\le y_{\ell-1}+\gamma^{\ell-1}).
\end{align*}
By definition we have
$y_{\ell-1} + \gamma^\ell =  y_\ell + (\gamma^\ell+\gamma^{\ell-1})+\gamma^\ell$,
which together with \eqref{eq:subset_ratio} implies
\begin{align*}
0\le \frac{\Pr(G_{\ell-1}\le y_{\ell-1})-\Pr(G_\ell\le y_\ell)}{\Pr(G_\ell\le y_\ell)} &\le \frac{\Pr(G\le y_{\ell-1}+\gamma^{\ell-1})-\Pr(G\le y_\ell-\gamma^\ell)}{\Pr(G\le y_\ell-\gamma^\ell)}\\ &\le \tilde c (2\gamma^{\ell}+2\gamma^{\ell-1})\log(P^{-1}),
\end{align*}
where we have used that $\Pr(G_{\ell-1}\le y_{\ell-1})\ge \Pr(G_\ell\le y_\ell)$ due to Lemma~\ref{lem:subset}.
\end{proof}
We note that the dependence on $\log(P^{-1})$ in equation \eqref{eq:subset_ratio} is the weakest assumption we can take in order to obtain an improvement through our proposed multilevel subset simulation strategy. However, the bound \eqref{eq:subset_ratio} crucially depends on the underlying \correction{limit state} function $G$ and one might drop the dependence on $\log(P^{-1})$ for certain models.

As in the classical subset
simulation, we estimate the failure sets such that 
\begin{equation}\label{eq:var_bound_mlsus}
  \delta(\widehat P_\ell)^2 =\frac{1-\Pr\left(F^{\text{ML}}_\ell\mid F^{\text{ML}}_{\ell-1}\right)}{\Pr\left(F^{\text{ML}}_\ell\mid F^{\text{ML}}_{\ell-1}\right)}(1+\phi_\ell)N_\ell^{-1} \leq L^{-s}\text{TOL}^2,
\end{equation}
%
which guarantees that the relative variance of the multilevel estimator is bounded by $\text{TOL}^2$.

{We finish by stating and proving the main theoretical result of the paper on the complexity of the proposed 
adaptive multilevel subset simulation method, with and without selective refinement, under similar assumptions 
made for the single-level complexity result in Theorem~\ref{thm:complexity_sus}.} {Recall that for simplicity we have set $K=L$ and $\ell = \ell_j=j$ for $j = 1,\dots,K$.}
\begin{theorem}\label{thm:complexity_ML}
Suppose that
\begin{enumerate}
\item Assumption~\ref{ass:error_estimate1} is satisfied for some $q \ge 0$ and $\gamma \in (0,1)$ and that \eqref{eq:subset_ratio} holds,
\item the level thresholds $y_\ell$ are defined as in Lemma~\ref{lem:subset}, and
\item there exists a constant $\phi < \infty$ such that
$\phi_{\ell}\le \phi$ in \eqref{eq:var_bound_mlsus}, for all $\ell=1,\ldots,L$.
\end{enumerate}
Then, for any $\text{TOL}>0$, the maximum discretisation level $L$ and the numbers $\{N_\ell\}$ of samples on each level can be chosen such that 
\[\mathbb E\bigg[\frac{|\widehat P^{\mathrm{ML}}- P|^2}{P^2}\bigg]\le \text{TOL}^2\]
with a cost that is bounded by
\begin{equation*}
\mathbb E\left[\costb{\widehat P^{\mathrm{ML}}}\right]\le\begin{cases} c_7 \mathrm{TOL}^{-\max(2,(1+q))}(1+\phi)\log(P^{-1}), & s = 1,\\
c_7 \mathrm{TOL}^{-\max(2,(1+q))}\log({\mathrm{TOL}}^{-1})(1+\phi)\log(P^{-1}), & s = 2, \end{cases}
\end{equation*}
for some constant $c_7>0$. Here, $s=1$ if the estimators $\widehat P_{\ell}$, $\ell=1,\ldots,L$, are uncorrelated and $s=2$ if they are fully correlated.

If in addition Assumption~\ref{ass:selective_refinement} is satisfied, the cost can be bounded by 
\begin{equation*}
\mathbb E\left[\costb{\widehat P^{\mathrm{ML}}}\right]\le \begin{cases} c_7 \mathrm{TOL}^{-\max(2,q)}(1+\phi)\log(P^{-1}), & s=1, \\
c_7 \mathrm{TOL}^{-\max(2,q)}\log({\mathrm{TOL}}^{-1})(1+\phi)\log(P^{-1}), & s=2. \end{cases}
\end{equation*}
\end{theorem}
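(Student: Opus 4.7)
The proof will closely follow that of Theorem \ref{thm:complexity_sus}, but with the key new ingredient being Lemma \ref{lem:level_bound}, which ensures that the relative variance of each conditional estimator $\widehat P_\ell$ shrinks geometrically with $\ell$. First I would split the relative mean squared error in the same way as \eqref{eq:sus_proof1}--\eqref{eq:sus_proof2}, introducing $P_L$ as an intermediate quantity, and choose the maximal level $L\propto \log(\mathrm{TOL}^{-1})$ so that the discretisation bias contribution is at most $\mathrm{TOL}^2/2$ and the prefactor $P_L^2/P^2$ is bounded by a constant close to one. This handles the discretisation error exactly as in the single-level proof.

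The remaining relative bias and relative variance contributions from the MCMC estimators are handled following \cite{AB01} as in the proof of Theorem \ref{thm:complexity_sus}: both are bounded by $\sum_\ell \delta(\widehat P_\ell)^2$ in the uncorrelated case ($s=1$), and by $\bigl(\sum_\ell \delta(\widehat P_\ell)\bigr)^2$ in the fully correlated case ($s=2$). A Cauchy--Schwarz step $\bigl(\sum_\ell \delta(\widehat P_\ell)\bigr)^2 \le L\sum_\ell \delta(\widehat P_\ell)^2$ then reduces the correlated case to demanding $\sum_\ell \delta(\widehat P_\ell)^2 \le c\,\mathrm{TOL}^2/L^{s-1}$. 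Combining \eqref{eq:var_bound_mlsus} with Lemma \ref{lem:level_bound} yields
\begin{equation*}
\delta(\widehat P_\ell)^2 \;\le\; c_5\,\gamma^\ell\,\log(P^{-1})\,(1+\phi)\,N_\ell^{-1},
\end{equation*}
so the per-level relative variance decays geometrically in $\ell$. This is the new multilevel effect -- a consequence of the shrinking subsets enforced by the threshold choice in Lemma \ref{lem:subset} -- that drives the improved complexity.

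The next step is to choose the sample sizes $N_\ell$ so as to minimise the total expected cost $\sum_\ell N_\ell\,c_0\,\gamma^{-\ell q}$ subject to the above constraint on $\sum_\ell \delta(\widehat P_\ell)^2$. A standard Lagrange multiplier argument with $a_\ell = c_0\gamma^{-\ell q}$ and $b_\ell = c_5 \gamma^\ell \log(P^{-1})(1+\phi)$ gives $N_\ell \propto \sqrt{b_\ell/a_\ell}$ and minimum total cost
\begin{equation*}
\bigl(\textstyle\sum_\ell \sqrt{a_\ell b_\ell}\bigr)^2 \,\big/\,\bigl(c\,\mathrm{TOL}^2/L^{s-1}\bigr)
\;\propto\; L^{s-1}\log(P^{-1})(1+\phi)\,\mathrm{TOL}^{-2}\Bigl(\sum_\ell \gamma^{\ell(1-q)/2}\Bigr)^2.
\end{equation*}
The geometric sum is bounded by a constant if $q<1$, equals $L$ if $q=1$, and grows like $\gamma^{-L(q-1)/2}\propto \mathrm{TOL}^{-(q-1)/2}$ if $q>1$; squaring and using $L^{s-1}=1$ for $s=1$ and $L^{s-1}=\log(\mathrm{TOL}^{-1})$ for $s=2$ then gives the claimed bound $\mathrm{TOL}^{-\max(2,1+q)}\log(P^{-1})(1+\phi)$, with the additional $\log(\mathrm{TOL}^{-1})$ factor in the correlated case.

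The refined result under Assumption \ref{ass:selective_refinement} follows by repeating the same Lagrangian analysis with $a_\ell = c_0(1+\gamma^{(1-q)\ell})$. For $q>1$ the dominant term in $\sqrt{a_\ell b_\ell}$ changes from $\gamma^{\ell(1-q)/2}$ to $\gamma^{\ell(2-q)/2}$, shifting the transition point by one and yielding the sharper exponent $\max(2,q)$. The borderline cases $q=1$ (respectively $q=2$) contribute only a logarithmic factor which can be absorbed into the constant $c_7$. I expect the main obstacle to be the careful bookkeeping of the various geometric sums for different ranges of $q$ and of the two sources of logarithmic factors -- the $\log(P^{-1})$ coming from Lemma \ref{lem:level_bound} and the $L=\log(\mathrm{TOL}^{-1})$ coming from the Cauchy--Schwarz step in the correlated case -- so that all constants line up with the exact form of the bound claimed in the theorem.
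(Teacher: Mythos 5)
Your argument is correct and reaches the paper's bounds, but you organise the sample-allocation step differently. The paper does not optimise: it simply imposes the per-level accuracy requirement \eqref{eq:var_bound_mlsus}, reads off $N_\ell \propto \frac{1-\Pr(F^{\text{ML}}_\ell\mid F^{\text{ML}}_{\ell-1})}{\Pr(F^{\text{ML}}_\ell\mid F^{\text{ML}}_{\ell-1})}(1+\phi_\ell)L^{s}\,\mathrm{TOL}^{-2}$, and then sums the level costs, invoking Lemma~\ref{lem:level_bound} to bound the resulting series by $\sum_{\ell}\gamma^{(1-q)\ell}\log(P^{-1})$; you instead minimise $\sum_\ell N_\ell\gamma^{-\ell q}$ subject to an aggregate constraint on $\sum_\ell\delta(\widehat P_\ell)^2$ via Lagrange multipliers, arriving at $\bigl(\sum_\ell\gamma^{(1-q)\ell/2}\bigr)^2$, which is asymptotically equivalent. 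Your route buys two things: the allocation $N_\ell\propto\sqrt{b_\ell/a_\ell}$ is the cost-optimal one (the paper explicitly leaves optimal $N_\ell$ as future work), and your Cauchy--Schwarz reduction of the correlated case to $\sum_\ell\delta(\widehat P_\ell)^2\le c\,\mathrm{TOL}^2/L^{s-1}$ produces the single factor $\log(\mathrm{TOL}^{-1})$ in the $s=2$ bound more transparently than the paper's per-level condition $\delta(\widehat P_\ell)^2\le L^{-s}\mathrm{TOL}^2$, which if tracked literally through $N_\ell\propto L^{s}$ would appear to cost an extra factor of $L$. One imprecision: at the borderline $q=1$ (resp.\ $q=2$ with selective refinement) the geometric sum equals $L$ and its square is $(\log\mathrm{TOL}^{-1})^2$, which cannot be ``absorbed into the constant $c_7$'' --- it is an unbounded extra logarithmic factor; the paper's own proof glosses over exactly the same borderline when it bounds $\sum_{\ell=1}^L\gamma^{(1-q)\ell}$ by $c_6(1+\gamma^{(1-q)L})$, so this is not a gap relative to the paper, but you should state it as an additional log factor rather than a constant.
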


\begin{proof}
The result follows similarly as the proof of Theorem~\ref{thm:complexity_sus} for single-level subset simulation. Note that the resulting number of samples \[
N_\ell\propto  \frac{P_L^2}{P^2} \frac{1-\Pr\left(F^{\text{ML}}_\ell\mid F^{\text{ML}}_{\ell-1}\right)}{\Pr\left(F^{\text{ML}}_\ell\mid F^{\text{ML}}_{\ell-1}\right)}(1+\phi_\ell)L^{s}\text{TOL}^{-2}
\] 
are now level dependent due to the fact that the probabilities $\mathbb P(F^{\text{ML}}_\ell\mid F^{\text{ML}}_{\ell-1})$ differ in $\ell$.  Hence, applying Lemma~\ref{lem:level_bound} the total computational costs result in
\begin{equation*}
  \begin{aligned}
  \mathbb E\left[\costb{\widehat P^{\mathrm{ML}}}\right] &\le \mathrm{TOL}^{-2}L^{s-1}(1+\phi)\sum_{\ell=1}^L\Pr(F^{\text{ML}}_\ell|F^{\text{ML}}_{\ell-1})^{-1}(1-\Pr(F^{\text{ML}}_\ell|F^{\text{ML}}_{\ell-1}))\gamma^
{-\ell q},\\
	&\le \mathrm{TOL}^{-2}L^{s-1}(1+\phi)\sum_{l=1}^L c_5\gamma^{(1-q)\ell}\log(P^{-1})\\ &\le c_6\mathrm{TOL}^{-2}L^{s-1}(1+\phi)(1+\gamma^{(1-q)L})\log(P^{-1})\\ 
	&\le c_7 \mathrm{TOL}^{-\max(2,1+q)}L^{s-1}(1+\phi)\log(P^{-1}),
  \end{aligned}
\end{equation*}
for some $c_5, c_6, c_7>0$. 
If in addition Assumption~\ref{ass:selective_refinement} holds the bound can be improved to
\[
  \mathbb E\left[\costb{\widehat P^{\mathrm{ML}}}\right] \le c_7 \mathrm{TOL}^{-\max(2,q)}L^{s-1}(1+\phi)\log(P^{-1}).
\]
\end{proof}
Clearly, the asymptotic complexity is significantly improved over classical subset simulation. In addition to the gains due to the level-dependent cost for each sample and to the variance reduction on the rarer subsets, an additional cost reduction in practice comes from the fact that the
accept/reject step for $\ell>0$ in Algorithm~\ref{alg:POP2} is computed to tolerance $\gamma^{\ell-1}$ and only accepted samples are then computed also to tolerance $\gamma^\ell$. 
The intermediate failure thresholds and thus the failure sets $F^{\text{ML}}_\ell$ are defined a priori based on the value of $\gamma$. Thus, the probabilities $\Pr(F^{\text{ML}}_\ell|F^{\text{ML}}_{\ell-1})$ are problem dependent and -- as in MLMC \cite{Giles08} -- optimal sample sizes $\{N_\ell\}$ are difficult to compute. This would be an interesting area for future investigation.


\section{Numerical Results}
\label{s:numerics}

In the following, we consider three numerical examples with increasing difficulty. We start the experiments with a one-dimensional toy example where we can ensure Assumption~\ref{ass:selective_refinement}. In this example, it is possible to verify the complexity results expected from Theorem~\ref{thm:complexity_sus} and Theorem~\ref{thm:complexity_ML} respectively. In our second example we consider a rare event estimation problem based on a Brownian motion. In this case, Assumption~\ref{ass:selective_refinement} does not hold almost surely but only in $L^p$.  The results of our numerical experiments remain promising and we observe a significant improvement through our multilevel subset simulation and the incorporation of selective refinement. The last experiment is based on an elliptic PDE model and represents a more realistic scenario of application. 

\subsection{Example 1: Toy experiment}

We start by verifying our derived complexity results on a simplified toy model. We assume that $G\sim\mathcal N(0,1)$ and define the pointwise approximation $G_\ell(\omega):= G(\omega)+\kappa(\omega)\gamma^\ell$ with $\kappa\sim\mathcal U(\{-1,1\})$ such that \eqref{eq:error_bound} is obviously satisfied. \correction{Furthermore, we let $\gamma=1/2$ and assume that $q = 2$ for the expected costs in \eqref{eq:cost1}.}
 
 The aim is to estimate the failure probability 
\begin{equation}\label{eq:ref_prob}
 \Pr(G\le -3.8) \approx 7.23\cdot 10^{-5}.
 \end{equation}

\update{Applying Algorithm~\ref{alg:selective} \cite[Alg.~1]{EHM14} allows to simulate $G_\ell^y$ satisfying Assumption~\ref{ass:selective_refinement}. For any accuracy level $\ell$, the selective refinement algorithm starts with the coarsest approximation $k=1$ and successively refines the accuracy by increasing $k$ until $|G_{k}(\omega)-y|\ge \gamma^k$ or $|G_{k}(\omega)-G(\omega)|\le \gamma^\ell$ is satisfied. Note that in more realistic applications, 
estimates of the error $|G_{k}(\omega)-G(\omega)|$ are needed, cf.~Section~\ref{ssec:num_pde} for more details. In this simplified example we increase the accuracy until $|G_{k}(\omega)-y|\ge \gamma^k$ or until $k = \ell$, since we know that  $|G_\ell(\omega)-G(\omega)| \le \gamma^\ell$ by definition. We then set $G_\ell^y(\omega)=G_{k}(\omega)$.}

We compare standard MC, classical subset simulation with and without selective refinement, as well as our proposed adaptive multilevel subset simulation algorithm, choosing $L\propto \log({\mathrm{TOL}}^{-1})$ in all cases, where $\mathrm{TOL}$ is the required tolerance.
For subset simulation, we use $K=5$ subsets such that all $\Pr(F_j\mid F_{j-1})\approx[0.1,0.2]$, where we choose 
the threshold values $(y_5,y_4,y_3,y_2,y_1,y_0) = (-3.8,-3.3,-2.8,-2,-1.3,\;\infty)$. In contrast, for multilevel  subset simulation we choose $y_\ell$ following Lemma~\ref{lem:subset} and let the number of subsets increase depending on the size of $L$. For the estimation of \correction{$\widehat P_1$} in \eqref{eq:ML_estimator} we apply a standard MC estimate.  The highest accuracy level $L$ and the numbers of samples on each subset ($N_j$ and $N_\ell$ resp.) are chosen according to the assumptions in Theorems~\ref{thm:complexity_sus} and \ref{thm:complexity_ML}, respectively. 

Fig.\ \ref{fig:toyexample:cost_tol} shows the estimated \correction{rRMSE}~$\delta(\widehat P)$ using the true reference probability of equation~\eqref{eq:ref_prob} plotted against the computational cost for the different applied estimators. We have used $100$ runs for building the estimates of $\delta(\widehat P)$ for each algorithm.
\begin{figure}[t]
\centering \includegraphics[width=0.65\textwidth]{./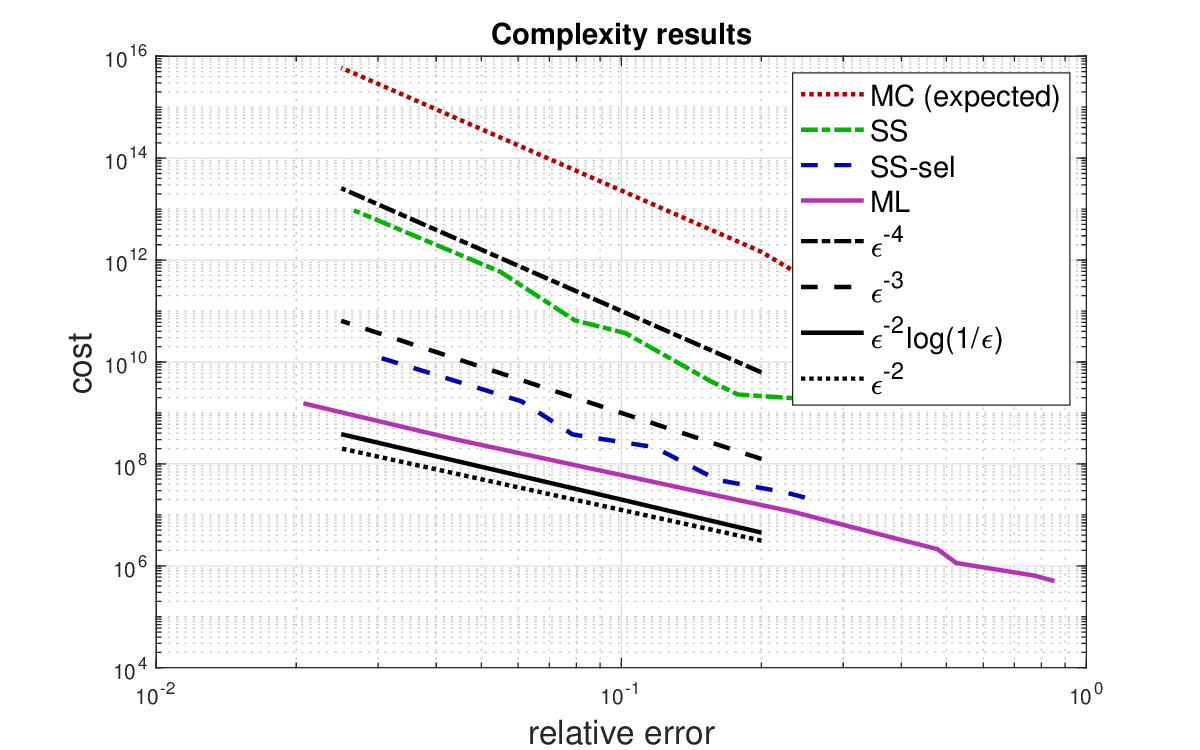}
\caption{Mean computational cost plotted against the estimated values of $\delta(\widehat P)$ (with tolerance ${\mathrm{TOL}}$) for the toy problem in Example 1 using standard MC (expected), classical subset simulation, as well as subset simulation and multilevel subset simulation with selective refinement. }\label{fig:toyexample:cost_tol}
\end{figure} 

\subsection{Example 2: Brownian motion}

In our second numerical example, we consider estimating the probability that a standard Brownian motion drops below a threshold value within a certain time interval. To be more precise, let $(B_t)_{t\ge0}$ be a Brownian motion. We are interested in the estimation of
\[\Pr(F) = \Pr\big(\min\nolimits_{t\in[0,1]}\, B_t \le -4\big),\]
where we can compute the reference value of $\Pr(F)$ by
\[\Pr\big(\min\nolimits_{t\in[0,1]}\, B_t \le -4\big) = 2\cdot \Pr(B_1\le -4) \approx 6.3\cdot 10^{-5}.\]
We define the limit state function pointwise by
\[G(\omega) := \min\nolimits_{t\in[0,1]}\, B_t(\omega) + 4,\]
and introduce approximations of the limit state function such that
\[
\update{\tilde G_k(\omega) := \min\nolimits_{t\in T_k}\, B_t(\omega)+4,\quad T_k := \Big\{\frac{i}{2^k}\mid i=0,\dots,2^k\Big\}.}
\]

\update{For all $1 \le p < \infty$, the resulting approximation 
error can be bounded in $L^p$ \cite{RITTER1990337}, i.e., there exists a constant $C_p>0$ such that
\begin{equation}\label{eq:errorbound_BM}
\|G - \tilde G_k\|_{L^p(\Omega)} = 
\mathbb E\Big[\big|\min\nolimits_{t\in[0,1]}\, B_t(\omega)- \min\nolimits_{t\in T_k}\, B_t(\omega)\big|^p\Big]^{1/p} \le C_p 2^{-k/2}\,.
\end{equation}}
The Brownian motion is generated path-wise through a Karhunen-Lo\'eve expansion 
\begin{equation*}
B_t(\omega) = \sum_{i=1}^\infty \xi_i(\omega) \varphi_i(t), 
\end{equation*}
with  $\xi_i \overset{\text{i.i.d.}}{\sim} \mathcal N(0,\frac{1}{(i-1/2)^2})$ and $\varphi_i(t) = \frac{\sqrt{2}}{\pi}\sin((i-1/2)\pi t)$. See  Fig.~\ref{fig:BM_paths} for various realizations of the Brownian motion,
conditioned on the different chosen subsets, i.e.~for different level thresholds $y_j$.
\begin{figure}[t]
\centering \includegraphics[width=0.75\textwidth]{./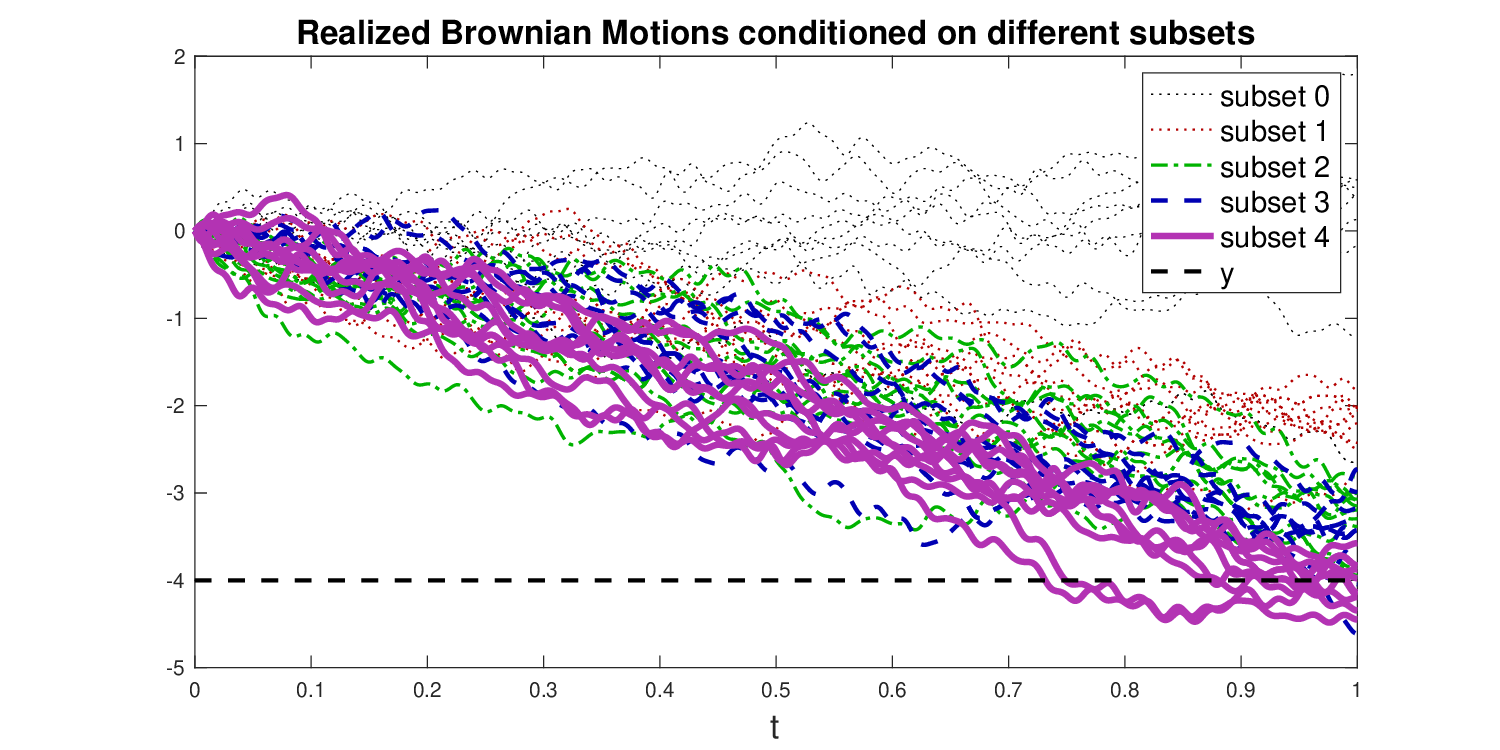}
\caption{Paths of the Brownian motion conditioned on the different subsets $F_j$. }\label{fig:BM_paths}
\end{figure}

\update{As in Example 1, on each accuracy level $\ell$ and for each sample $\omega$, the selective refinement algorithm starts on the coarsest level $k=1$ and successively refines the accuracy by increasing $k$ until $|\tilde G_{k}(\omega)-y|\ge \gamma^k$ or $|\tilde G_{k}(\omega)-G(\omega)|\le \gamma^\ell$. We then set $G_{\ell}^{y}(\omega) := \tilde G_{k}(\omega)$. Based on the error bound \eqref{eq:errorbound_BM}, we consider $\gamma = \frac{1}{\sqrt{2}}$ in our numerical experiments. Then $q=2$ and the latter of the two conditions above is satisfied at least in an $L^p$-sense for $k = \ell + 2\log_2(C_p)$. Unfortunately it is not possible to satisfy this condition in an almost sure sense.

Depending on the required tolerance value ${\mathrm{TOL}}$ we set again $L \propto \log({\mathrm{TOL}}^{-1})$ and fix the number of subsets in the classical subset simulation to $K_{\mathrm{SL}}=5$. In the multilevel formulation, $K_{\mathrm{ML}} =\max(L-1,8)$ subsets are considered and the threshold values $y_j$ are chosen again according to Lemma~\ref{lem:subset}, 
with the following values of $\ell_j$ for the failure sets $F_j = \{\omega: G_{\ell_j}^{y_j}(\omega) \le y_j\}$:}
\begin{table}[H]
    \centering
    \begin{tabular}{c|ccccccc}
      $j$  & 1   & 2     & 3  & 4 & 5 & \dots & $L-1$ \\ \hline
      $\ell_j$ & 4     & 4    & 4   & 5 & 6 & \dots & L\\
    \end{tabular}
    \label{tab:levels_BM}
  \end{table}

\noindent 
The MC estimates for the multilevel estimator $\text{ML}$ are built using 100 paths, resulting in
\[\correction{\mathbb E}[\widehat P^{\text{ML}}] \approx 5.11\cdot 10^{-5}\quad\text{and}\quad \delta(\widehat P^{\text{ML}}) \approx 0.0534 \leq {\mathrm{TOL}} =: 0.1\, .\]

To compare the proposed multilevel method with classical subset simulation in Fig.~\ref{fig:cost_tol} we compare the resulting costs for various choices of 
${\mathrm{TOL}}$.  Note that we have estimated the expected number of samples for the classical MC estimator as $N=({\mathrm{TOL}})^{-2} P^{-1}$.

\begin{figure}[t]
\centering \includegraphics[width=0.65\textwidth]{./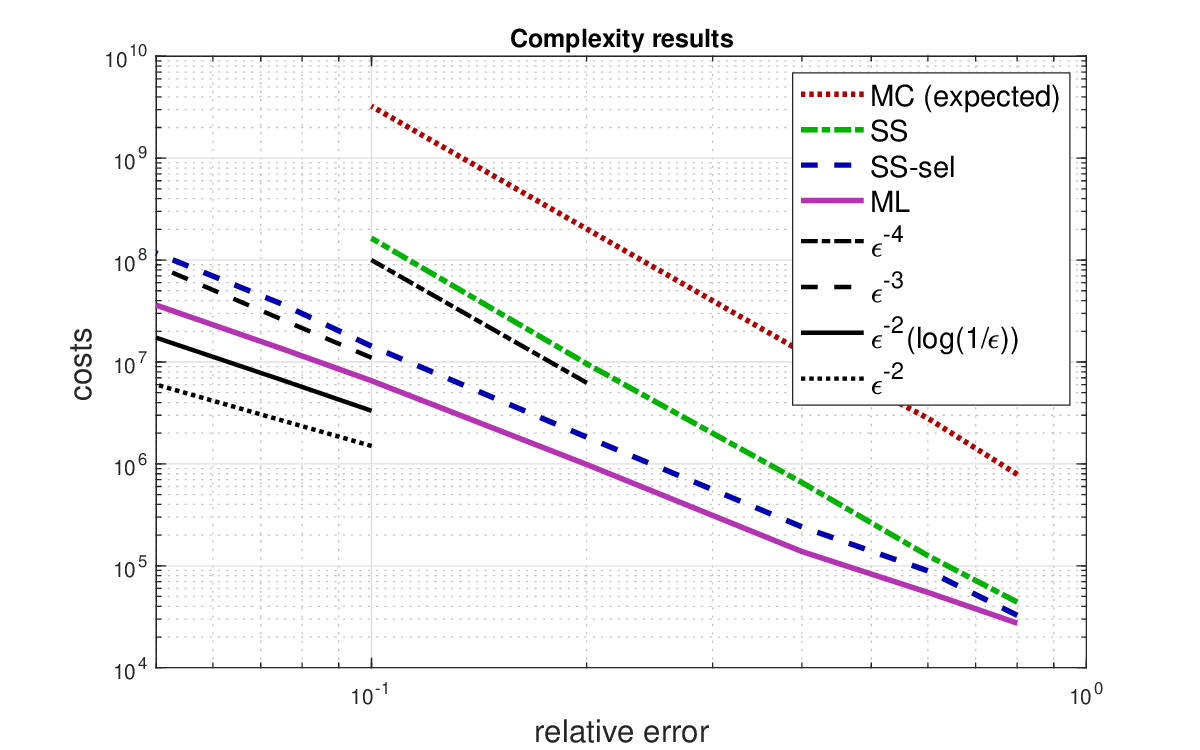}
\caption{Mean computational cost plotted against the estimated values of $\delta(\widehat P)$ (with tolerance ${\mathrm{TOL}}$) for the Brownian motion in Example 2
using standard MC (expected), classical subset simulation, as well as subset simulation and multilevel subset simulation with selective refinement.}\label{fig:cost_tol}
\end{figure}

\subsection{Example 3: Elliptic PDE}
\label{ssec:num_pde}

Finally, we consider the following diffusion equation, which is used, e.g., to model stationary Darcy flow:
\begin{equation}\label{eq:strong1}
  \begin{aligned}
    -\nabla\cdot A\nabla u &= 0 \quad\text{in }\mathcal{D},\\
    \text{subject to} \quad u  = 0 \quad\text{on }\Gamma_{1}, \quad
    u & = 1 \quad\text{on }\Gamma_{2}, \quad
    \nu\cdot A\nabla u = 0 \quad\text{on }\Gamma_{3}\cup\Gamma_4,\\
  \end{aligned}
\end{equation}
where $\mathcal D=(0,1)\times(0,1)$ is a unit square and 
$\Gamma_1$, $\Gamma_2$,
$\Gamma_3$, $\Gamma_4$ are the left, right, upper, and lower boundaries,
respectively. The
permeability $A(x,\omega)$ is a log-normal random field. In particular, $\log(A(x,\omega))$ is
a stationary, zero mean Gaussian field $\mathcal N(0,\mathcal C)$ with covariance operator $\mathcal C = (-\Delta+\tau^2\cdot {\mathrm{id}})^{-\alpha}$, where $\Delta$ denotes the Laplacian operator equipped with Neumann boundary conditions and we set $\tau =0.1$ and $\alpha = 1$. The random field has been generated path-wise via a 
truncated Karhunen-Lo\'eve expansion, 
see Fig.~\ref{fig:realization} for two realizations of the random field and the associated PDE solutions. 

\update{The functional to define the limit state function is chosen to be
\begin{equation}\label{eq:lsf_cont}
  \mathcal G(u)= y - \frac1{|B|}\int_{B}u(x)\mathrm{d}x,
\end{equation}}
for $B = [0.4,0.6]\times [0.9,0.99]\subset \mathcal D$, i.e.,  `failure' occurs when the 
mean of $u$ over the subdomain $B$ exceeds $y$, with $|\cdot|$ denoting the area of $B$. \correction{For the numerical experiments below we choose $y=0.92$.}
\begin{figure}[t]\label{fig:realization}
\centering
\includegraphics[width=1\textwidth]{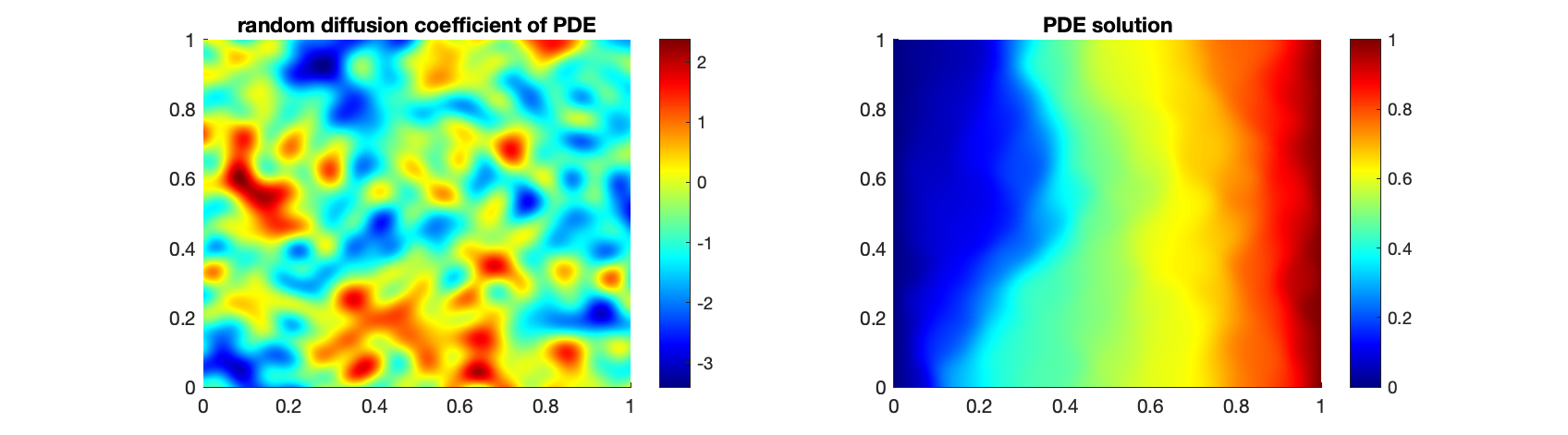}\\\includegraphics[width=1\textwidth]{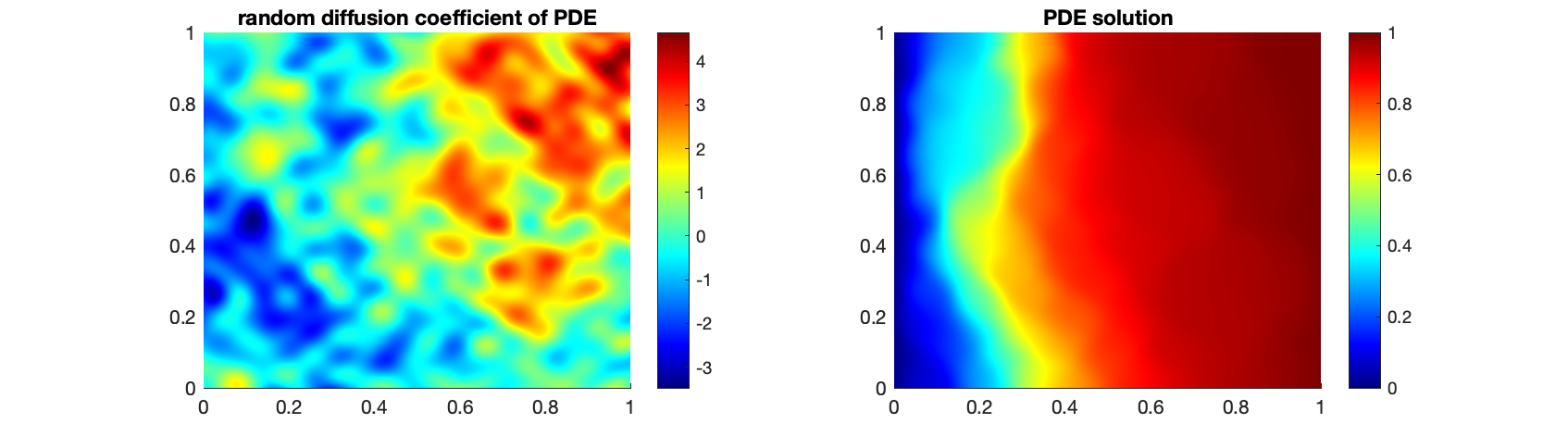}\vspace{-0.5cm}
   \caption{Two realizations of the log-normal permeability field $A(x,\omega)$ (left) and the corresponding solution for the Darcy flow problem \eqref{eq:strong1} (right) in Example 3.  The first realization is sampled from the whole probability space, whereas the second one is a rare event realization.}
\end{figure}

Now, given a sample $\omega \in \Omega$ and defining
\[
H^1_D(\mathcal{D})=\{v\in H^1(\mathcal{D}): v|_{\Gamma_1}=0, 
\ v|_{\Gamma_2}=1\} \ \ \text{and} \ \
H^1_0(\mathcal{D})=\{v\in H^1(\mathcal{D}): v|_{\Gamma_1 \cup \Gamma_2}=0\}, 
\]
the weak form of \eqref{eq:strong1} is equivalent to finding
$u \in H^1_D(\mathcal{D})$ such that
\begin{equation}\label{eq:weak}
  a(\omega;u,v) = \int_{\mathcal{D}}A(x,\omega)\nabla u \cdot\nabla v\dx = 0,
  \quad\text{for all }v\in H^1_0(\mathcal{D}).
\end{equation}
\update{The value of the exact limit state function at $\omega$ is set to be $G(\omega) = \mathcal G(u)$.}

We approximate the weak formulation and the limit state function
with a finite element (FE) method. Let $\mathcal{T}_h$ be a uniform
triangulation of $\mathcal{D}$, and suppose $\mathcal{V}_h$ denotes the associated $P_1$-Lagrange
FE space. The FE approximation of the solution $u$ of \eqref{eq:weak} on $\mathcal{T}_h$ is then defined to be the
 $u_h\in \mathcal{V}_h\cap H^1_D(\mathcal{D})$ satisfying
\begin{equation}
  a(\omega;u_h,v_h) = 0\quad\text{for all }v_h\in \mathcal{V}_h\cap H^1_0(\mathcal{D}).
\end{equation}
\update{The FE approximation of the limit state function is defined as $\tilde G_h(\omega) = \mathcal G(u_h)$.

Given $\gamma \in (0,1)$, we then compute $G_\ell(\omega)$ by iterating over a sequence of uniformly
refined meshes $\mathcal{T}_h$ with $h \to 0$, starting from some $h_0>0$, until
   $|G(\omega)-\tilde G_h(\omega)|\leq\gamma^\ell$.
For selective refinement we start with accuracy $k=1$ and reduce $h$ until $|G(\omega)-\tilde G_h(\omega)|\leq\gamma^k$ or $|\tilde G_h(\omega)-y|\geq \gamma^k$. If the second condition is satisfied we stop and set $G^y_\ell(\omega) = \tilde G_h(\omega)$. Otherwise we increment $k \to k+1$ and repeat the process until the second condition is satisfied for some $k$ or until $k = \ell$.
}

To estimate the error in the limit state function \correction{$\tilde G_h$ computed on mesh $\mathcal{T}_h$ we use a hierarchical a-posteriori error estimator. In particular, for the chosen random field and limit state function we may assume that
\begin{equation}
\label{eq:FE_error_bounds}
ch^2 \le |G(\omega)-\tilde G_h(\omega)| \le Ch^2, \quad \text{for some} \ \ 0 < c \le C < \infty.
\end{equation}
Provided $r := C/c < 4$ it follows from those bounds and the triangle inequality 
that 
\[
|G(\omega)-\tilde G_h(\omega)|
\le 
\,\frac1{1- r/4}\,
|\tilde G_{h/2}(\omega)-\tilde G_h(\omega)| \update{=: \eta_h\,. }
\]}
The \texttt{PDE Toolbox} within \texttt{MATLAB} is used in our implementation. The FE meshes that were constructed are shown in
Fig.~\ref{fig:mesh}.
For the numerical experiments we choose $\gamma =1/4$ \update{and use the sufficient condition $\eta_h \le \gamma^\ell$ to bound the FE error on $\mathcal{T}_h$. In the experiments, the estimate $\eta_h$ of the FE error on the finest mesh depicted in Fig.~\ref{fig:mesh} was always below $\gamma^4$.}
\begin{figure}[t]\label{fig:mesh}
\centering
\includegraphics[width=\textwidth]{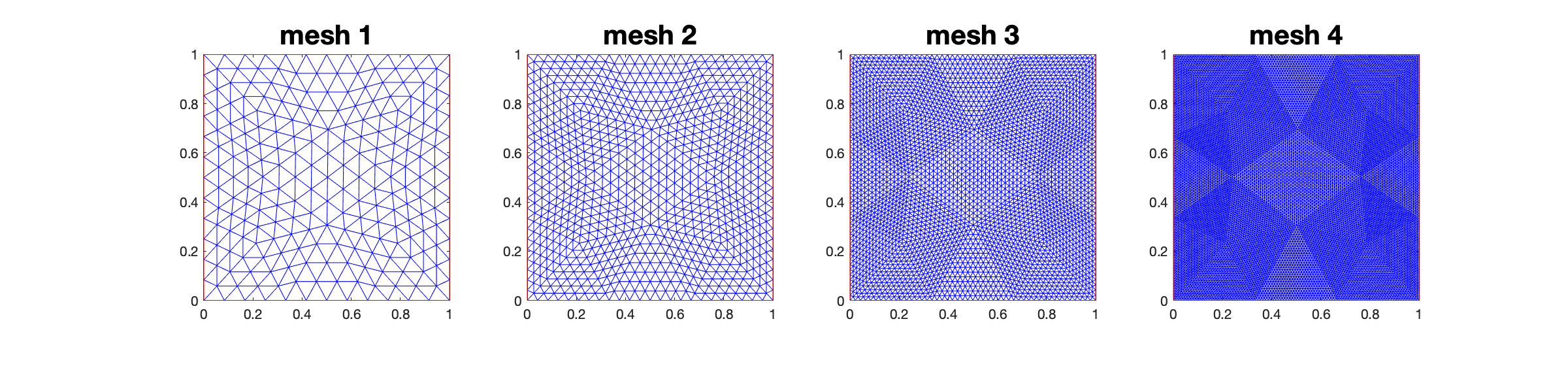}\vspace{-1cm}
   \caption{Different levels of refinement of the FE mesh 
   for 
   the Darcy flow problem \eqref{eq:strong1} in Example 3.}
\end{figure}

For the proposed single-level and multilevel estimators, we choose $L$, $N_j$, $N_\ell$ according to Theorems~\ref{thm:complexity_sus} and \ref{thm:complexity_ML}, with a fixed number of subsets $K=4$ for the classical (single level) subset simulation. The number of subsets for our multilevel subset simulation on the highest evaluated accuracy, together with the corresponding accuracy levels $\ell_j$, are presented in Table~\ref{tab:ml1} (left). Moreover, the Table shows the mean probability for each level and the mean acceptance rate for the Markov chains. The details for single-level subset simulation are presented in Table~\ref{tab:ml1} (right). 

\begin{table}[t]
  \centering
  \begin{tabular}{c|ccccc}
    $j$ & $1$& $2$& $3$ & $4$ & $5$  \\ \hline
    $\ell_j$ & $2$ & $2$ & $2$ & $3$ & $4$ \\
    $P_j$ & $0.05$ & $0.13$ & $0.17$ & $0.38$ & $0.83$  \\
    $\alpha_j$ & --- &$0.45$ & $0.32$ & $0.22$ & $0.18$
  \end{tabular} \qquad\qquad
  \begin{tabular}{c|ccccc}
    $j$ & $1$& $2$& $3$ & $4$  \\ \hline
    $\ell_j$ & $4$ & $4$ & $4$ & $4$  \\
    $P_j$ & $0.13$ & $0.1$ & $0.16$ & $0.07$   \\
    $\alpha_j$ & --- &$0.5$ & $0.38$ & $0.29$ 
  \end{tabular}\medskip
  \caption{Chosen level $(\ell_j)$, mean probability $(P_j)$ and mean acceptance rates $(\alpha_j)$ for each subset $j$ after $100$ runs of 
  adaptive multilevel subset 
simulation 
(left) and 
single-level subset simulation 
(right), both with selective refinement.}
  \label{tab:ml1}
\end{table}
After $100$ runs of the multilevel estimator $\text{ML}$ in this setting \update{with $L=4$}, the estimated mean rare event probability and the relative variance are
\begin{equation}
  \correction{\mathbb E}[\widehat P^{\mathrm{ML}}] \approx 1.8\cdot 10^{-4}\quad\text{and}\quad 
  \delta\big(\widehat P^{\mathrm{ML}}\big) \approx 0.172,
\end{equation}
while for the classical subset simulation estimator $\text{SuS}$ they are
\begin{equation}
  \correction{\mathbb E}[\widehat P^{\mathrm{SuS}}] \approx 1.5\cdot 10^{-4}\quad\text{and}\quad \update{\delta\big(\widehat P^{\mathrm{SuS}}\big) \approx 0.192}.
\end{equation}

For each level $\ell$ in the multilevel subset simulation, the mean number of samples computed on each mesh level $k$ are 
presented in Table~\ref{tab:number1} (left). In the selective refinement process, the computation for each sample always starts on mesh level $k=1$ and the numbers on the lower mesh levels are the cumulative ones.  \update{The vast majority of samples are generated from inexpensive low-resolution simulations, or put in another way, our multilevel estimator requires the equivalent cost of $\sim 175$ PDE solves on the finest accuracy level to estimate a rare event of $\mathcal{O}(10^{-4})$ with a relative variance of $\delta\big(\widehat P^{\mathrm{ML}}\big) \approx 0.17$.
Since the chosen regime is not yet optimally balancing the cost, there is still potential for significant further gains.}
\begin{table}[t]
  \centering
  \begin{tabular}{c|ccccc}
    $j \backslash k$ & 1 & 2 & 3 & 4  \\ \hline
    $1$  & $3602$ & $610.7$  & $0$     & $0$       \\
    $2$  & $3602$  & $1902.9$ & $0$  & $0$         \\ 
    $3$  & $3602$  & $1564$ & $0$ & $0$      \\ 
    $4$  & $2002$  & $1071.6$ & $326.2$ & $0$  \\ 
    $5$ & $502$ & $281$ & $108.5$ & $15.2$ \\\hline
    \text{Total} & $13310$ & $5430.2$ & $434.7$ & $15.2$\\\hline
    $c_k$ & $1$ & $\update{8}$ & $\update{64}$  & $\update{512}$
  \end{tabular}\quad 
   \update{\begin{tabular}{c|ccccc}
    $j \backslash k$ & 1 & 2 & 3 & 4  \\ \hline
    $1$  & $3602$ & $1179.8$  & $381.4$     & $66.4$       \\
    $2$  & $3602$  & $2518.6$ & $1272.1$  & $247.6$         \\ 
    $3$  & $3602$  & $2373.3$ & $1271.1$ & $252.1$      \\ 
    $4$  & $3602$  & $2220.3$ & $1066.8$ & $181.8$ \\ \ \\ \hline
    \text{Total} & $14404$ & $8291.9$ & $3991.4$ & $747.9$\\\hline
    $c_k$ & $1$ & $\update{8}$ & $\update{64}$  & $\update{512}$
  \end{tabular}\medskip
  }
  \caption{Mean number of samples computed on mesh level $k$ for subset $j$, for adaptive multilevel subset simulation 
  (left) 
  and single-level subset simulation (right), both with selective refinement, estimated from $100$ runs \update{with $L=4$} and summed up over all mesh levels. The tables also show the total number of samples and the (normalized) theoretical cost per sample $c_k$ on each mesh level \update{assuming $q=3/2$}.}
  \label{tab:number1}
\end{table}

\update{The table also shows the normalized theoretical cost $c_k$ 
for one sample on mesh level~$k$ (including the cost for the error estimation). Here, we have assumed a sparse direct solver such as \texttt{CHOLMOD} \cite{Chen08} (i.e. backslash in Matlab) to solve each of the arising FE systems. Theoretically, the cost for a sparse direct solver like \texttt{CHOLMOD} applied to a 2D FE system grows like $\mathcal{O}(n_{\text{FE}}^{3/2})$, while the number of unknowns grows like $n_{\text{FE}} = \mathcal{O}(4^{k})$ under uniform refinement. Hence, we have chosen $q = 3/2$ in the definition of the normalized theoretical cost $c_k$. 
%
Based on this assumption, a comparison of the total normalized cost for all the estimators is presented in Fig.\ \ref{fig:pde_cost_tol} (left) for various choices of the tolerance $\text{TOL}$. 
The vastly superior performance of the proposed multilevel estimator is again apparent, e.g., for an estimate with $\delta(\widehat P)\approx 0.25$ there is a more than 10-fold gain in efficiency compared to subset simulation with selective refinement and a more than 60-fold gain compared to standard subset simulation with all samples being computed to accuracy $\gamma^L$. For a fair comparison, standard subset simulation is also ran with the hierarchical error estimator to guarantee the required accuracy for each sample. Asymptotically, the cost does appear to grow as predicted in Theorem~\ref{thm:complexity_ML}, even though  (as in Example 2) Assumption~\ref{ass:selective_refinement} does not hold uniformly in $\omega$ here. Note that as predicted in Theorem~\ref{thm:complexity_sus}, the selective refinement approach alone also leads to clear gains over standard subset simulation and to a better asymptotic rate. 
\begin{figure}[t]
\centering
\hspace*{-0.3cm}\includegraphics[width=0.52\textwidth]{./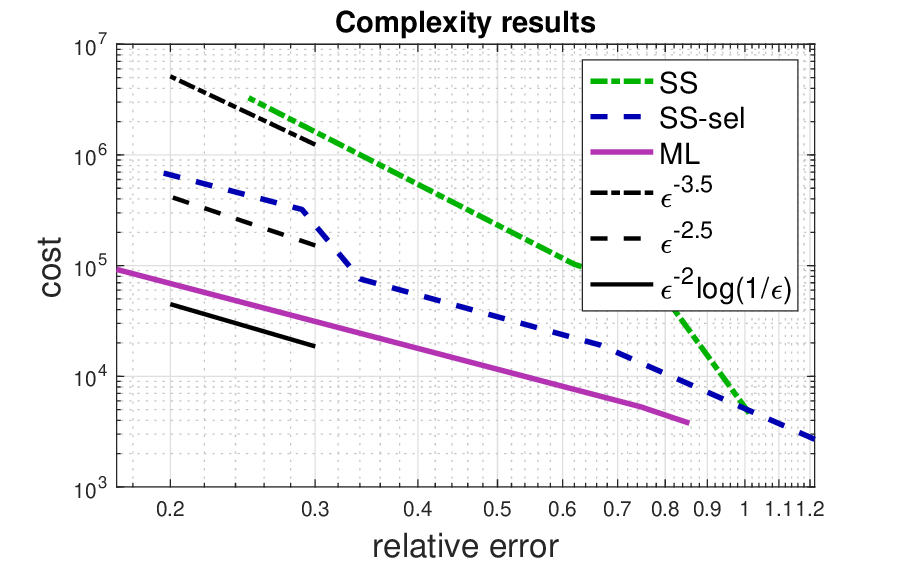}\includegraphics[width=0.52\textwidth]{./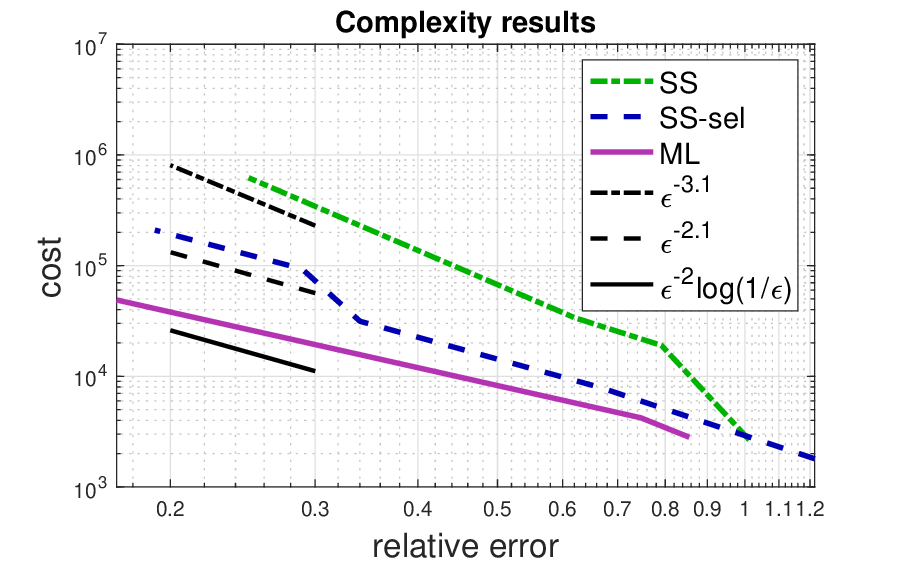}\hspace*{-0.3cm}
\caption{Mean computational cost plotted against the estimated values of $\delta(\widehat P)$ for Example 3, the Darcy problem in \eqref{eq:strong1}, \update{obtained by varying ${\mathrm{TOL}}$ for $q=3/2$ (left) and $q=1.1$ (right)} using classical subset simulation, as well as subset simulation and adaptive multilevel subset simulation with selective refinement.}\label{fig:pde_cost_tol}
\end{figure} 

In practice, the cost of \texttt{CHOLMOD} applied to a 2D FE system is often observed to grow only like $\mathcal{O}(n_{\text{FE}}^{1.1})$ (at least in the pre-asymptotic regime). Thus, in Fig.\ \ref{fig:pde_cost_tol} (right) we also plot the gains for $q=1.1$. Even in this case, we still see a 20-fold gain in efficiency of the proposed multilevel estimator compared to standard subset simulation. 
The gains for both the single- and the multilevel method with selective refinement are even more dramatic when $q>3/2$, e.g., for 3D problems or for rougher random coefficients.}

\section{Conclusions}
\label{s:conclusions}

In this paper, we propose a new multilevel subset simulation estimator of the probability of rare events. By constructing a hierarchy of numerical approximations to a model of a complex physical process, and by using a posteriori error estimation, the subset property of the intermediate failure domains is preserved. The estimator was tested in a Darcy flow problem, for which {it reduced the cost compared to the classical subset simulation estimator by more than a \update{factor of 60} for a practically relevant relative error tolerance of 25\%}. Given the wide applicability of subset simulation, several problems beyond the simulation of rare events may also benefit from this dramatic increase in efficiency.

\section*{Acknowledgements}
The work of RS is supported by the Deutsche Forschungsgemeinschaft (DFG, German Research Foundation) under Germany’s Excellence Strategy EXC 2181/1 - 390900948 (the Heidelberg STRUCTURES Excellence Cluster). RS and SW acknowledge the support of the Erwin Schrödinger Institute. 

\bibliographystyle{siam}
\bibliography{references}

\begin{thebibliography}{10}

\bibitem{ADGL2017}
{\sc A.~Agarwal, S.~De~Marco, E.~Gobet, and G.~Liu}, {\em {Rare event
  simulation related to financial risks: efficient estimation and sensitivity
  analysis}}, HAL Preprint, hal-01219616 (2017).
\newblock Available at https://hal.archives-ouvertes.fr/hal-01219616/.

\bibitem{Au05}
{\sc S.~Au}, {\em Reliability-based design sensitivity by efficient
  simulation}, Computers \& Structures, 83 (2005), pp.~1048--1061.

\bibitem{AuPa16}
{\sc S.~Au and E.~Patelli}, {\em Rare event simulation in finite-infinite
  dimensional space}, Reliability Engineering \& System Safety, 148 (2016),
  pp.~67--77.

\bibitem{AuBook}
{\sc S.~Au and Y.~Wang}, {\em Engineering Risk Assessment with Subset
  Simulation}, Wiley, Singapore, 2014.

\bibitem{AB01}
{\sc S.-K. Au and J.~L. Beck}, {\em Estimation of small failure probabilities
  in high dimensions by subset simulation}, Probabilistic Engineering
  Mechanics, 16 (2001), pp.~263--277.

\bibitem{AB03}
\leavevmode\vrule height 2pt depth -1.6pt width 23pt, {\em Subset simulation
  and its application to seismic risk based on dynamic analysis}, Journal of
  Engineering Mechanics, 129 (2003), pp.~901--917.

\bibitem{Barth2011}
{\sc A.~Barth, C.~Schwab, and N.~Zollinger}, {\em Multi-level {Monte Carlo}
  finite element method for elliptic {PDEs} with stochastic coefficients},
  Numerische Mathematik, 119 (2011), pp.~123--161.

\bibitem{BeRa96}
{\sc R.~Becker and R.~Rannacher}, {\em A feed-back approach to error control in
  finite element methods: Basic analysis and examples}, East-West J. Numer.
  Math, 4 (1996), pp.~237--264.

\bibitem{BLV16}
{\sc J.~Bect, L.~Li, and E.~Vazquez}, {\em Bayesian subset simulation},
  SIAM/ASA Journal on Uncertainty Quantification, 5 (2017), pp.~762--786.

\bibitem{Botev2010}
{\sc Z.~I. Botev and D.~P. Kroese}, {\em Efficient {Monte Carlo} simulation via
  the generalized splitting method}, Statistics and Computing, 22 (2010),
  pp.~1--16.

\bibitem{BLR15}
{\sc C.-E. Br\'ehier, T.~Leli\`evre, and M.~Rousset}, {\em Analysis of adaptive
  multilevel splitting algorithms in an idealized case}, ESAIM: Probability and
  Statistics, 19 (2015), pp.~361--394.

\bibitem{Bucher1988119}
{\sc C.~G. Bucher}, {\em Adaptive sampling — an iterative fast {Monte Carlo}
  procedure}, Structural Safety, 5 (1988), pp.~119 -- 126.

\bibitem{Bucher2009504}
{\sc C.~G. Bucher}, {\em Asymptotic sampling for high-dimensional reliability
  analysis}, Probabilistic Engineering Mechanics, 24 (2009), pp.~504 -- 510.

\bibitem{BDW2011}
{\sc T.~Butler, C.~Dawson, and T.~Wildey}, {\em A posteriori error analysis of
  stochastic differential equations using polynomial chaos expansions}, SIAM
  Journal on Scientific Computing, 33 (2011), pp.~1267--1291.

\bibitem{CAZT14}
{\sc V.~Caron, A.~Guyader, M.~M. Zuniga, and B.~Tuffin}, {\em Some recent
  results in rare event estimation}, ESAIM: Proceedings and Surveys, 44 (2014),
  pp.~239--259.

\bibitem{MR2909622}
{\sc F.~C{\'e}rou, P.~Del~Moral, T.~Furon, and A.~Guyader}, {\em Sequential
  {M}onte {C}arlo for rare event estimation}, Statistics and Computing, 22
  (2012), pp.~795--808.

\bibitem{Chen08}
{\sc Y.~Chen, T.~A. Davis, W.~W. Hager, and S.~Rajamanickam}, {\em Algorithm
  887: {CHOLMOD}, supernodal sparse cholesky factorization and
  update/downdate}, ACM Transactions on Mathematical Software, 35 (2008),
  pp.~22:1--22:14.

\bibitem{Cliffe2011}
{\sc K.~A. Cliffe, M.~B. Giles, R.~Scheichl, and A.~L. Teckentrup}, {\em
  Multilevel {Monte Carlo} methods and applications to elliptic {PDEs} with
  random coefficients}, Computing and Visualization in Science, 14 (2011),
  pp.~3--15.

\bibitem{CRSW13}
{\sc S.~L. Cotter, G.~O. Roberts, A.~M. Stuart, and D.~White}, {\em M{CMC}
  methods for functions: modifying old algorithms to make them faster},
  Statistical Science, 28 (2013), pp.~424--446.

\bibitem{Detommaso2019}
{\sc G.~Detommaso, T.~Dodwell, and R.~Scheichl}, {\em Continuous level {M}onte
  {C}arlo and sample-adaptive model hierarchies}, SIAM/ASA Journal on
  Uncertainty Quantification, 7 (2019), pp.~93--116.

\bibitem{DiGa17}
{\sc F.~DiazDelaO, A.~Garbuno-Inigo, S.~Au, and I.~Yoshida}, {\em Bayesan
  updating and model class selection with subset simulation}, CMAME,  (2017),
  pp.~1102--1121.

\bibitem{EEHM14}
{\sc D.~Elfverson, D.~Estep, F.~Hellman, and A.~M{\aa}lqvist}, {\em Uncertainty
  quantification for approximate {$p$}-quantiles for physical models with
  stochastic inputs}, SIAM/ASA Journal on Uncertainty Quantification, 2 (2014),
  pp.~826--850.

\bibitem{EHM14}
{\sc D.~Elfverson, F.~Hellman, and A.~M{\aa}lqvist}, {\em A multilevel {M}onte
  {C}arlo method for computing failure probabilities}, SIAM/ASA Journal on
  Uncertainty Quantification, 4 (2016), pp.~312--330.

\bibitem{Giles08}
{\sc M.~Giles}, {\em Multilevel {M}onte {C}arlo path simulation}, Operations
  Research, 56 (2008), pp.~607--617.

\bibitem{GS02}
{\sc M.~B. Giles and E.~Suli}, {\em Adjoint methods for pdes: a posteriori
  error analysis and postprocessing by duality}, Acta Numerica, 11 (2002),
  pp.~145--236.

\bibitem{GL2015}
{\sc E.~Gobet and G.~Liu}, {\em Rare event simulation using reversible shaking
  transformations}, SIAM Journal on Scientific Computing, 37 (2015),
  pp.~A2295--A2316.

\bibitem{Gong2021}
{\sc Z.~T. Gong, F.~A. DiazDelaO, P.~O. Hristov, and M.~Beer}, {\em History
  matching with subset simulation}, International Journal for Uncertainty
  Quantification, 11 (2021), pp.~19--38.

\bibitem{GHM11}
{\sc A.~Guyader, N.~Hengartner, and E.~Matzner-L{\o}ber}, {\em Simulation and
  estimation of extreme quantiles and extreme probabilities}, Applied
  Mathematics \& Optimization, 64 (2011), pp.~171--196.

\bibitem{HST2021}
{\sc A.-L. Haji-Ali, J.~Spence, and A.~Teckentrup}, {\em Adaptive multilevel
  monte carlo for probabilities}, Preprint arXiv:2107.09148,  (2021).

\bibitem{SplittingMC}
{\sc H.~Kahn and T.~Harris}, {\em Estimation of particle transmission by random
  sampling}, Bureau of Standard Appl. Math. Series., 12 (1951), pp.~27--30.

\bibitem{MR2538857}
{\sc P.-S. Koutsourelakis}, {\em Accurate uncertainty quantification using
  inaccurate computational models}, SIAM J. Sci. Comput., 31 (2009),
  pp.~3274--3300.

\bibitem{Neal98}
{\sc R.~Neal}, {\em Regression and classification using {G}aussian process
  priors}, Bayesian Statistics, 6 (1998), pp.~475--501.

\bibitem{O2018}
{\sc T.~Oden, C.~Dawson, and T.~Wildey}, {\em Adaptive multiscale predictive
  modelling}, Acta Numerica, 27 (2018), p.~353–450.

\bibitem{PaWoKiZwSt14}
{\sc I.~Papaioannou, W.~Betz, K.~Zwirglmaier, and D.~Straub}, {\em {MCMC}
  algorithms for subset simulation}, Probabilistic Engineering Mechanics, 41
  (2015), pp.~89--103.

\bibitem{QiLu11}
{\sc W.~Qi, Z.~Lu, and C.~Zhow}, {\em New topology optimization method for wing
  leading-edge ribs}, Journal of Aircraft, 48 (2011), pp.~1741--1748.

\bibitem{RITTER1990337}
{\sc K.~Ritter}, {\em Approximation and optimization on the {W}iener space},
  J.~Complexity, 6 (1990), pp.~337--364.

\bibitem{SCHUELLER1987293}
{\sc G.~Schuëller and R.~Stix}, {\em A critical appraisal of methods to
  determine failure probabilities}, Structural Safety, 4 (1987), pp.~293 --
  309.

\bibitem{doi:10.1061}
{\sc M.~Shinozuka}, {\em Basic analysis of structural safety}, J.~Structural
  Engineering, 109 (1983), pp.~721--740.

\bibitem{UP2015}
{\sc E.~Ullmann and I.~Papaioannou}, {\em Multilevel estimation of rare
  events}, SIAM/ASA Journal on Uncertainty Quantification, 3 (2015),
  pp.~922--953.

\bibitem{UPMS2021}
{\sc F.~Uribe, I.~Papaioannou, Y.~M. Marzouk, and D.~Straub}, {\em
  Cross-entropy-based importance sampling with failure-informed dimension
  reduction for rare event simulation}, SIAM/ASA Journal on Uncertainty
  Quantification, 9 (2021), pp.~818--847.

\bibitem{WLPU2020}
{\sc F.~Wagner, J.~Latz, I.~Papaioannou, and E.~Ullmann}, {\em Multilevel
  sequential importance sampling for rare event estimation}, SIAM Journal on
  Scientific Computing, 42 (2020), pp.~A2062--A2087.

\bibitem{WLPE2021}
{\sc F.~Wagner, J.~Latz, I.~Papaioannou, and E.~Ullmann}, {\em Error analysis
  for probabilities of rare events with approximate models}, SIAM Journal on
  Numerical Analysis, 59 (2021), pp.~1948--1975.

\bibitem{XiXi17}
{\sc S.~Xin-Shi, Y.~Xiong-Qing, and L.~Hong-Shuang}, {\em Subset simultion for
  multi-objective optimisation}, Applied Mathematical Modelling, 44 (2017),
  pp.~425--445.

\bibitem{ZuBe12}
{\sc K.~Zuev, J.~Beck, S.~Au, and L.~Katafygiotis}, {\em Bayesian
  post-processor and other enhancements of subset simulation for estimating
  failure probabilities in high dimensions}, Computers \& Structures, 92
  (2012), pp.~283--296.

\end{thebibliography}

\newpage

\begin{appendix}

\section{Algorithmic details}
\label{app:algorithm}
\update{We provide here some details on our versions of subset simulation, subset simulation with selective refinement and the adaptive multilevel subset simulation algorithm based on
shaking transformations and
 the POP Algorithm~\ref{alg:POP2}, as presented in Section~\ref{sec:MCMC}. The limit state function~$G$, as well as the corresponding numerical approximations $G_\ell$ are modelled 
 via measurable Gaussian transformations $T$ and $T_\ell$ such that
\[G(\omega) = T(\Theta(\omega))\quad \text{and} \quad G_\ell(\omega) = T_\ell(\Theta(\omega)),\]
where $T,T_\ell: \mathbb R^d\to\mathbb R$ are measurable mappings and $\Theta\sim\mathcal N(0,I_{\mathbb R^d})$. 
They only differ in the construction of the subsets $F_j= \{\Theta\in A_j\}\subset \Sigma$ and $A_j\subset\mathcal B(\mathbb R^d)$.}
\paragraph{Subset simulation} \update{In order to apply shaking transformations within subset simulation, the subsets are represented in the form of \eqref{eq:subsets_GT} with a fixed level of accuracy $L$ and
\[F_j = \{\omega\in\Omega:\ G_L(\omega)\le y_j\},\quad \text{for}\quad j=1,\dots,K. \]
We define $\varphi(\theta,y) = T_L(\theta)-y$ and apply the specific variant of POP based on a Gaussian transformation presented in Algorithm~\ref{alg:classicalsus}.}
\begin{algorithm}[t]
\caption{\update{Subset simulation based on POP -- with and without selective refinement.}}
\label{alg:classicalsus}
\update{\begin{algorithmic}[1]
\State Given an accuracy level $L$, a correlation parameter $\eta\in[0,1]$ and a seed $\Theta_{0,0}\sim \Theta$, define 
subsets $F_j = \{\Theta\in A_j\}$ for all $j=1,\dots,K$ such that
\[
A_j = 
\begin{cases}
\{\theta\in\mathbb R^d\mid T_L(\theta)\le y_j\},& \text{for classical subset simulation}, \\[0.5ex]
\{\theta\in\mathbb R^d\mid T_L^{y_j}(\theta)\le y_j\}, &  \text{for selective refinement if}\ y_{j-1}-y_j\ge 2\gamma^L, 
\end{cases}
\]
as well as a shaking transformation $S_\eta(\theta,w)$ 
and
a rejection operator $M_j^{S_\eta}$ according to \eqref{eq:pcn_shaking} and \eqref{eq:rejection_operator}, respectively.\vspace{1ex} 
\For{$j=0,\dots,K-1$} \vspace{1ex}
\For{ $i=0,\dots,N_{j+1}-1$}
\State Generate $W_{j,i}\sim \Theta$.
\State Shake and accept/reject $\Theta_{j,i+1} = M_j^{S_\eta}(\Theta_{j,i},W_{j,i})$.
\EndFor
\State Estimate the probability of subsets $F_{j+1}$ by \ 
$\widehat P_{j+1} = \frac1{N_{j+1}}\sum\limits_{k=0}^{N_{j+1}-1} \mathbbm{1}_{A_{j+1}}(\Theta_{j,k})$.\vspace{-1ex}
\State Set $i_j = \arg\min\{k\mid \Theta_{j,k} \in A_{j+1}\}$.\vspace{1ex}
\State Define initial state for next level $\Theta_{j+1,0} = \Theta_{j,i_j}$\, .\vspace{1ex}
\EndFor\vspace{1ex}
\State \textbf{Result:} $\widehat P = \prod_{j=1}^{K} \widehat P_j$\, .
\end{algorithmic}}
\end{algorithm}

\paragraph{Subset simulation with selective refinement} \update{In order to incorporate the selective refinement strategy into subset simulation, 
we need to define the intermediate failure sets 
\[
F_j = \{\omega\in\Omega:\ G_L^{y_j}(\omega)\le y_j\},\quad \text{for}\quad j=1,\dots,K.
\]
Following Lemma~\ref{lem:subset0}, these sets satisfy the subset property provided $y_{j-1}-y_j\ge 2\gamma^L$. However, the classical strategy to choose the threshold values $y_j$ such that $\mathbb P(F_j\mid F_{j-1}) \approx p_0$ for some constant value $p_0\in[0.1,0.3]$ will not always ensure $y_{j-1}-y_j\ge 2\gamma^L$. On the other hand, it is easy to check in practice on each of the failure sets whether it 
is satisfied in order to decide whether to apply selective refinement on that set or not. 
To formulate the algorithm via POP, we introduce measurable Gaussian transformations $T_L^{y_{j}}:\mathbb R^d\to\mathbb R$ such that the $G_L^{y_j}(\omega) = T_L^{y_j}(\Theta(\omega))$ satisfy Assumption~\ref{ass:selective_refinement}. This small variant of the above subset simulation algorithm is also incorporated in Algorithm~\ref{alg:classicalsus}.}

\begin{algorithm}[t]
\caption{\update{Adaptive multilevel subset simulation.}}
\label{alg:MLsubset}
\update{\begin{algorithmic}[1]
\State Given an accuracy level $L$ and a correlation parameter $\eta\in[0,1]$, 
define\vspace{0.5ex}
\begin{itemize}
    \item subsets $F_\ell^{\mathrm{ML}} = \{\Theta\in A_\ell\}$ with 
    $A_\ell = \{\theta\in\mathbb R^d \mid T_\ell^{y_\ell}(\theta) \le y_\ell\}$, \ for $\ell =0,\dots, L$, and the sequence of failure thresholds
    \[
    y_L=0 \quad \text{and} \quad y_\ell=(\gamma^\ell+\gamma^{\ell+1}) + y_{\ell+1}, \quad \ell=1,\dots,L-1,
    \]
    \item a shaking transformation $S_\eta(\theta,w)$ as defined in \eqref{eq:pcn_shaking}, and 
    \item a rejection operator $M_j^{S_\eta}$ as defined in \eqref{eq:rejection_operator}.\vspace{0.5ex}
\end{itemize}
	\State Set $i=0$, $N_1 = 1$, $\widehat\delta(\widehat P_1) = \infty$.
 \While{$\widehat\delta(\widehat P_1)>\text{TOL}$}
	\State Generate i.i.d.~$\Theta_{0,i}\sim\Theta$.\vspace{-1ex}	
        \State Estimate the probability of subset $F^{\text{ML}}_1$ by 
 $\displaystyle \widehat P_1 = \frac1{N_1}\sum\limits_{k=0}^{N_1-1} \mathbbm{1}_{A_{1}}(\Theta_{0,k}).$ 
	\State Estimate $\delta (\widehat P_1)\approx \widehat\delta (\widehat P_1)$ and increase $i = i+1$, $N_1 = N_1+1$.
 \EndWhile
 \State Set $i_0 = \arg\min\{k\mid \Theta_{0,k} \in A_{1}\}$
 and define initial state $\Theta_{1,0} = \Theta_{0,i_0}$ for next level.
\For{$\ell=1,\dots,L-1$} 
	\State Set $i=0$, $N_{\ell+1} = 1$, $\widehat P_{\ell+1}=\mathbbm{1}_{A_{\ell+1}}(\Theta_{\ell,i})$. 
	\While{$\widehat\delta(\widehat P_{\ell+1})>\text{TOL}$}
		\State Generate $W_{\ell,i}\sim \Theta$.
		\State Shake and accept/reject $\Theta_{\ell,i+1} = M_\ell^{S_\eta}(\Theta_{\ell,i},W_{\ell,i})$.\vspace{0.5ex}
  \State Increase $i = i+1$, $N_{\ell+1} = N_{\ell+1}+1$.	
  \State Estimate the subset probability $F^{\text{ML}}_{\ell+1}$ by 
    $\displaystyle \widehat P_{\ell+1} = \frac1{N_{\ell+1}}\sum\limits_{i=0}^{N_{\ell+1}-1} \mathbbm{1}_{A_{\ell+1}}(\Theta_{\ell,i}).$ 
		\State Estimate $\delta (\widehat P_{\ell + 1})\approx \widehat\delta (\widehat P_{\ell +1})$.
	\EndWhile
	\State Set $i_\ell = \arg\min\{k\mid \Theta_{\ell,k} \in A_{\ell+1}\}$ and
        define initial state $\Theta_{\ell+1,0} = \Theta_{\ell,i_\ell}$ for next level. 
\EndFor
\State \textbf{Result:} $\widehat P^{\mathrm{ML}_1} = \prod_{\ell=1}^{L} \widehat P_\ell$\,.
\end{algorithmic}}
\end{algorithm}\paragraph{Adaptive multilevel subset simulation} 
\update{To formulate this
algorithm in such a way that the subset property is satisfied we choose 
$y_1,\dots,y_K$ as defined in Lemma~\ref{lem:subset} and choose the subsets 
\[
F_j^{\mathrm{ML}} = \{\omega\in\Omega:\ G_{\ell_j}^{y_j}(\omega) \le y_j\},\quad \text{for}\quad j=1,\dots,K.
\]
The proposed adaptive multilevel subset simulation algorithm with a standard MC estimator for the first 
subset is summarised in Algorithm~\ref{alg:MLsubset}. 
In addition, an adaptive stopping criterion based on a statistical estimation of the rRMSE as presented in \eqref{eq:cov_subset} is introduced in Algorithm~\ref{alg:MLsubset}. Note that for simplicity we set $K=L$ and $\ell_j  = j$, for all $j=1,\dots,L$.

To employ classical subset simulation for the estimation of $\Pr(F^{\text{ML}}_1)$ instead, it suffices to replace lines 2-7 by a single-level subset simulation estimator as defined in Algorithm~\ref{alg:classicalsus}, but on the coarsest accuracy level $\ell=1$.}

\end{appendix}

\end{document}